\newtheorem{thm}{Theorem}[section]
\newtheorem{lem}[thm]{Lemma}
\newtheorem{prop}[thm]{Proposition}
\newtheorem{conj}[thm]{Conjecture}
\newtheorem{obs}[thm]{Observation}
\theoremstyle{definition}
\newtheorem{defn}[thm]{Definition}
\theoremstyle{remark}
\theoremstyle{definition}
\newtheorem{notn}[thm]{Notation}
\numberwithin{equation}{section}
\newcommand{\Text}[1]{\text{\textnormal{#1}}}
\begin{document}

\title{On Serial Symmetric Exchanges of Matroid Bases}%
\author{Daniel Kotlar and Ran Ziv}%
\address{Computer Science Department, Tel-Hai College, Upper Galilee 12210, Israel}%
\email{dannykot@telhai.ac.il, ranziv@telhai.ac.il}%

\thanks{The second author thanks Ron Aharoni and Eli Berger for fruitful discussions concerning the problem.}%
\subjclass{97N70, 52B40, 05B35}%
\keywords{matroid, base, serial symmetric exchange}%

%\date{February 28, 2011}%
%\dedicatory{}%
%\commby{}%
% ----------------------------------------------------------------
\begin{abstract}
We study some properties of a serial (i.e. one-by-one) symmetric exchange of elements of two disjoint bases of a matroid. We show that any two elements of one base have a serial symmetric exchange with some two elements of the other base. As a result, we obtain that any two disjoint bases in a matroid of rank 4 have a full serial symmetric exchange.
\end{abstract}
\maketitle
% ----------------------------------------------------------------
\section{Introduction}

A matroid is a hereditary family $M$ of subsets (called independent) of a finite ground set $S$ that satisfies an exchange axiom: If $A,B \in M$ and $|B|>|A|$, then there exists $x\in B\setminus A$ such that $A \cup{x}\in M$. A maximal independent set is called a \emph{base}. An element $x\in S$ is spanned by $A$ if either $x\in A$ or $I\cup \{x\}\not\in M$ for some independent set $I\subseteq A$. The rank of $A\subseteq S$, denoted here by $\rho(A)$, is the size of a maximal independent subset in $A$. We also adopt the common notation $A+x$ for $A\cup\{x\}$ and $A-x$ for $A\setminus\{x\}$. A \emph{circuit} is a minimal dependent set. When $I$ is independent but $I+x$ is not, we shall denote the unique minimal subset of $I$ that spans $x$ (called the \emph{support} of $x$) by $C(I,x)$.  We denote by $C^{+}(I,x)$ the circuit $C(I,x)+x$. For further knowledge and details about matroid theory the reader is referred to Oxley~\cite{Oxley11} and Welsh~\cite{Welsh76}.

The main goal of this paper is to examine the following conjecture:
\begin{conj}\label{11::11}
Let $B_{1}$ and $B_{2}$ be two disjoint bases of a matroid $M$ of rank $n$. There exists an ordering $\{b_{1}\prec b_{2}\prec\cdots\prec b_{2n}\}$ of the elements of $B_{1}\cup B_{2}$, such that the first $n$ elements belong to $B_{1}$ and, for every $i=1,2,...,2n$, the set $\{b_{i\bmod{2n}},b_{(i+1)\bmod{2n}},\ldots,b_{(i+n-1)\bmod{2n}}\}$ is a base.
\end{conj}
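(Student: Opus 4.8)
The plan is to attempt the conjecture by induction on the rank $n$, using the paper's serial two-element exchange as the tool that repairs the ``seams'' of the cyclic sequence. First I would restate the goal in terms of the two orderings themselves. Writing $B_1=\{a_1,\ldots,a_n\}$ and $B_2=\{c_1,\ldots,c_n\}$ in the sought order (so $b_i=a_i$ and $b_{n+i}=c_i$), the cyclic window condition splits into two families: that $\{a_{k+1},\ldots,a_n\}\cup\{c_1,\ldots,c_k\}$ is a base for every $k$ (the windows straddling the $a_n$--$c_1$ seam), and, symmetrically, that $\{a_1,\ldots,a_k\}\cup\{c_{k+1},\ldots,c_n\}$ is a base for every $k$ (the windows straddling the $c_n$--$a_1$ seam). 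Read one window at a time, each family is a chain of single symmetric exchanges carrying $B_1$ to $B_2$; the whole difficulty is that one pair of orderings must serve both chains simultaneously.

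For the inductive step I would contract a carefully chosen $a_n\in B_1$. Choosing $c_1\in C(B_2,a_n)$ makes $B_2-c_1+a_n$ a base, so that $B_1-a_n$ and $B_2-c_1$ are disjoint bases of the rank-$(n-1)$ minor $M/a_n$. The induction hypothesis then supplies orderings of these two smaller bases in which every window of size $n-1$ is a base of $M/a_n$ and in which $B_1-a_n$ and $B_2-c_1$ appear as contiguous blocks. I would reinsert $a_n$ at the end of the first block and $c_1$ at the start of the second, so that the first $n$ elements are again exactly $B_1$, and then verify the windows according to which of $a_n,c_1$ each contains.

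Every window containing $a_n$ reduces, upon deleting $a_n$, to a set that should be a window of the minor, and so is certified by the induction --- with one exception: the reinserted $c_1$ reappears inside precisely those windows that contain both $a_n$ and $c_1$, and these size-$(n-1)$ traces are \emph{not} among the windows the induction certified, since the induction worked with $B_2-c_1$, from which $c_1$ was deleted. This is where the serial two-element exchange enters: it is exactly strong enough to force the base property for a bounded block of windows immediately around the $a_n$--$c_1$ seam, by producing a consistent one-by-one exchange of the pair $a_n,c_1$ and their neighbours compatible with the inductive ordering. The windows through the opposite $c_n$--$a_1$ seam avoid $a_n$ entirely, so the contraction $M/a_n$ says nothing about them, and I would have to control them by a symmetric argument (the mirror-image deletion, or a second application of the two-element exchange at that seam).

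The step I expect to be the genuine obstacle is that the number of uncertified windows at each seam grows with $n$: reinserting $c_1$ corrupts on the order of $n-1$ windows at the first seam, and an equal number at the second seam are left untouched by the minor, whereas the two-element exchange repairs only a constant-sized block. Equivalently, the symmetric coupling between the two halves of the cyclic sequence --- the very feature that places the two-base case outside the coprimality hypotheses under which cyclic orderings are known to exist --- resists a clean one-element-at-a-time descent. I would therefore expect the induction to close only when the two seams' repaired blocks overlap and leave no uncontrolled middle, which is exactly the low-rank situation: the rank-$4$ theorem is the first nontrivial case in which the two-element exchange alone suffices, and pushing past it is where a genuinely new idea would be needed.
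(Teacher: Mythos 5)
This statement is Conjecture~\ref{11::11}, which the paper does \emph{not} prove: it is posed as an open problem, and the paper establishes it only for $\rho(M)=3$ (recalled from earlier work) and $\rho(M)=4$ (Theorem~4.2, via Theorem~\ref{thm1} and Proposition~\ref{prop1}). So there is no ``paper's own proof'' to compare against, and your proposal is not a proof either --- it is a strategy sketch whose failure you yourself diagnose. Your diagnosis is essentially correct, and it is worth making the obstruction precise. Your reformulation of the cyclic condition into two families of windows is exactly the equivalence with Conjecture~\ref{11::22}, and the choice $c_1\in C(B_2,a_n)$ does make $B_1-a_n$ and $B_2-c_1$ disjoint bases of $M/a_n$. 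But the induction then controls only the windows containing $a_n$, i.e.\ those straddling the $a_n$--$c_1$ seam; the windows through the $c_n$--$a_1$ seam avoid $a_n$ entirely and live in the deletion $M\setminus a_n$, not the contraction, so a single ordering would have to be simultaneously good for a contraction minor and a deletion minor, and nothing forces the two inductive hypotheses to produce compatible orderings. On top of that, as you note, reinserting $c_1$ corrupts on the order of $n$ windows at the first seam, whereas Theorem~\ref{thm1} certifies only a serial exchange of a \emph{pair}, so the repair tool is too weak by an unbounded margin once $n\geq 5$.

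This is consistent with how the paper actually uses its machinery: Theorem~\ref{thm1} (any two elements of $A$ admit a serial symmetric exchange with some two elements of $B$) plus Proposition~\ref{prop1} suffice precisely because at rank $4$ two applications of the pairwise result, together with one leftover single exchange, exhaust both bases --- which is your observation that the two seams' repaired blocks only overlap in low rank. If you want to salvage something from your outline, the honest conclusion is the one the paper draws: the pairwise serial exchange is a genuine partial result (Conjecture~\ref{conj32} for $|A_1|=2$), but extending it to a full cyclic ordering requires an idea that is not a rank induction by contraction, and the general conjecture remains open.
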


As far as we know, this intriguing conjecture was posed implicitly in an early paper by Gabow~\cite{Gabow76}, and later in the more general form of ``cyclic base orders'' by Kajitani and Sugishta~\cite{Kaj83} and Weidemann~\cite{Wei}.
Partial results and some further possible generalizations appear in the works of Kajitaniet al. \cite{Kaj88}, Cordovil and Moreira \cite{Cor93} and van den Heuvel and Thomass\'{e} \cite{HT}. Recently Bonin~\cite{Bonin10} proved it for sparse paving matroids.

It is easy to see that Conjecture~\ref{11::11} may be reformulated in the following equivalent form:
\begin{conj}\label{11::22}
Let $A$ and $B$ be two disjoint bases of a matroid $M$ of rank $n$. There exists an ordering $\{a_{1}\prec a_{2}\prec\cdots\prec a_{n}\}$ of the elements of $A$ and an ordering $\{b_{1}\prec b_{2}\prec\cdots\prec b_{n}\}$ of the elements of $B$, such that for every $i=1,2,\ldots,n$ both $(A\setminus\{a_{1},\ldots,a_{i}\})\cup\{b_{1},\ldots,b_{i}\}$ and $(B\setminus\{b_{1},\ldots,b_{i}\})\cup\{a_{1},\ldots,a_{i}\}$ are bases of $M$.
\end{conj}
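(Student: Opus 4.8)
Set $E = A \cup B$ and work in the restriction $N = M|_{E}$, a matroid of rank $n$ on the $2n$-element set $E$ in which $A$ and $B$ are \emph{complementary} bases. For $X \subseteq E$ with $|X| = n$, the complement $E \setminus X$ is a base of $N$ exactly when $X$ is a base of the dual $N^{*}$; hence requiring that both $X_i = (A\setminus\{a_1,\dots,a_i\})\cup\{b_1,\dots,b_i\}$ and its complement $Y_i$ be bases of $M$ is the same as requiring each $X_i$ to be a \emph{common base of $N$ and $N^{*}$}. My first move is therefore to recast Conjecture~\ref{11::22} as the following self-dual statement: there is a monotone sequence of common bases $A = X_0, X_1, \dots, X_n = B$ of $N$ and $N^{*}$ in which $X_i$ is obtained from $X_{i-1}$ by deleting one element of $A$ and inserting one element of $B$. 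This turns two asymmetric base conditions into a single condition and makes the target a path in the common-base structure of a matroid and its dual.

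To run an induction on the rank I would prove the following two-matroid generalization, which is precisely what one contraction step produces: given matroids $M_1, M_2$ of rank $r$ on a common ground set of size $2r$ possessing a pair of complementary common bases $A, B$, there is a monotone swap-sequence from $A$ to $B$ in which each intermediate set is a base of $M_1$ whose complement is a base of $M_2$; the original statement is the instance $M_1 = M_2 = N$. For the inductive step one first selects a \emph{cross} symmetric exchange, a pair $a_1 \in A$, $b_1 \in B$ with $A - a_1 + b_1$ a base of $M_1$ and $B - b_1 + a_1$ a base of $M_2$. Since $a_1$ then stays out of every later $X_i$ while $b_1$ stays in, the remaining path lives among the common bases that contain $b_1$ and avoid $a_1$; deleting $b_1$ on the $M_1$-side, these correspond bijectively to the common bases of the rank-$(r-1)$ minors $M_1' = (M_1/b_1)\setminus a_1$ and $M_2' = (M_2/a_1)\setminus b_1$, for which $A - a_1$ and $B - b_1$ are again complementary common bases. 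Applying the inductive hypothesis to $(M_1', M_2')$ and re-attaching $b_1$ (resp. $a_1$) would close the step.

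The hard part is the interaction between the two sides, which is exactly what forces the two-matroid formulation on us. Even when $M_1 = M_2 = N$, the descendant minors $(N/b_1)\setminus a_1$ and $(N/a_1)\setminus b_1$ are in general \emph{different} matroids, so the induction cannot be kept inside the original self-dual class: one genuinely must control a pair of distinct matroids. Two things must be secured, and I expect the second to be the principal obstruction. First, a cross symmetric exchange need not obviously exist for an arbitrary admissible pair $(M_1, M_2)$; the classical symmetric exchange theorem (see \cite{Oxley11}) guarantees one inside a single matroid, but here the two required bases lie in different matroids, so one needs a matroid-intersection argument or a very careful choice of $a_1$. Second, the first swap must be chosen so that the descendants $(M_1', M_2')$ still admit a valid first swap, and so on all the way down; ruling out a dead end is a reachability problem in the common-base exchange structure of two matroids, which is not known to be well behaved in general. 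This coupling is what a full proof must tame, and it is consistent with the fact that the present paper establishes only the two-element serial exchange and, as a consequence, the rank-$4$ case, where the sequence is short enough that the two contracted matroids cannot yet diverge uncontrollably.
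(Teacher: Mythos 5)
You did not claim a complete proof, and rightly so: the statement you were given is Conjecture~\ref{11::22} of the paper, which is open; the paper itself proves only the two-element serial symmetric exchange (Theorem~\ref{thm1}) and deduces the rank-4 case, so there is no full proof to match yours against. But your scaffold has a refutable flaw, not just an acknowledged incompleteness: the two-matroid generalization on which your induction rests is \emph{false} as stated. Take $E=\{a_1,a_2,b_1,b_2\}$, let $M_1$ be the rank-2 matroid on $E$ with parallel pairs $a_1\parallel b_1$ and $a_2\parallel b_2$, and let $M_2$ be the rank-2 matroid with parallel pairs $a_1\parallel b_2$ and $a_2\parallel b_1$. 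Then $A=\{a_1,a_2\}$ and $B=\{b_1,b_2\}$ are complementary common bases of both $M_1$ and $M_2$, so the pair $(M_1,M_2)$ satisfies all your hypotheses. A monotone path requires a first intermediate $X_1=A-a_i+b_j$ that is a base of $M_1$ whose complement $B-b_j+a_i$ is a base of $M_2$. The only sets of the form $A-a_i+b_j$ that are bases of $M_1$ are $\{a_2,b_1\}$ and $\{a_1,b_2\}$; their complements are $\{a_1,b_2\}$ and $\{a_2,b_1\}$, each a parallel pair of $M_2$, hence dependent. So no admissible first swap exists at all: your ``cross symmetric exchange'' can genuinely fail to exist (not merely fail to be obvious), and a fortiori no path exists. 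Since, as you correctly observe, one contraction step takes even the diagonal instance $M_1=M_2=N$ to a pair of \emph{distinct} matroids, your induction must invoke the hypothesis for general admissible pairs, and that hypothesis is false.

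The only repair inside your framework is to restrict the inductive class to pairs \emph{reachable} from a block-matroid instance by a sequence of good swaps; but certifying that a chosen swap keeps you inside that class is exactly the dead-end problem you flagged, i.e., it is the original conjecture in disguise, so the induction gains nothing. This failure mode also explains the paper's quite different, bottom-up strategy: rather than inducting on rank, it stays inside the single self-dual instance and develops circuit-exchange machinery --- the $Conn$ sets of Notation~\ref{notn1}, the circuit--cocircuit argument of Proposition~\ref{lem2}, and Lemmas~\ref{lem2:5}--\ref{lem4} tracking how fundamental circuits evolve under one exchange --- to prove that any two elements of $A$ admit a serial symmetric exchange with some two elements of $B$, and then assembles the rank-4 case combinatorially from that. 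Your reformulation of the two base conditions as ``$X_i$ is a common base of $N$ and $N^*$'' is correct and tidy, and your minor bookkeeping (that $A-a_1$ and $B-b_1$ remain complementary common bases of $(M_1/b_1)\setminus a_1$ and $(M_2/a_1)\setminus b_1$) checks out; but the inductive engine itself cannot run, and any fix must confront the coupling between the two sides head-on, as the paper does at small scale.
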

In \cite{Cor93} the same problem was cast in terms of the ``base-cobase graph'':
\begin{defn}
A matroid $M$ whose ground set $S$ is a disjoint union of two bases is called a \emph{block matroid}. The \emph{base-cobase graph} $G(V,E)$ of a block matroid $M$ consists of a set of vertices $V=\{B\in M|B \,\textrm{and}\,S\setminus B\,\textrm{are bases}\}$, where the unordered pair $(B,B')$ is an edge if and only if $B$ and $B'$ are bases in $V$ differing by exactly two elements, i.e. $|B\bigtriangleup B'|=2$.
\end{defn}
Under these terms, Conjecture~\ref{11::22}, restricted to block matroids, takes the following form \cite{Cor93}:
\begin{conj}\label{11::33}
If $G$ is the base-cobase graph of a block matroid of rank $n$, then the diameter of $G$ is equal to $n$.
\end{conj}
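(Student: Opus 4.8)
The plan is to show that Conjecture~\ref{11::33} is a faithful graph-theoretic restatement of Conjecture~\ref{11::22}, and to isolate precisely the content that makes it a genuine conjecture rather than a routine translation. Throughout, recall that every vertex $v\in V$ is paired with its complement $\bar v=S\setminus v\in V$, since the defining condition is symmetric in $v$ and $\bar v$. First I would record two elementary facts. Because $S$ is a disjoint union of two bases, $V\neq\emptyset$, so $\operatorname{diam}(G)$ is well defined. Moreover, traversing an edge of $G$ replaces a single element of the current base, hence changes the symmetric difference with any fixed target by at most two; since $|v\,\triangle\,\bar v|=2n$ for a complementary pair, every walk from $v$ to $\bar v$ has length at least $n$. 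Thus $\operatorname{diam}(G)\ge n$ unconditionally, and the entire force of Conjecture~\ref{11::33} lies in the bound $\operatorname{diam}(G)\le n$.

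Next I would make explicit the correspondence linking the two conjectures. A path $v_0=A,v_1,\dots,v_n=B$ of length $n$ joining a complementary pair $A,B$ must be lossless: each edge has to delete an element of $A$ and insert an element of $B$, since otherwise the symmetric difference with $B$ would fail to drop by two at some step and $B$ could not be reached in $n$ moves. Such a path is therefore exactly a choice of orderings $a_1,\dots,a_n$ of $A$ and $b_1,\dots,b_n$ of $B$ for which each $v_i=(A\setminus\{a_1,\dots,a_i\})\cup\{b_1,\dots,b_i\}$ lies in $V$; and $v_i\in V$ asserts precisely that both $v_i$ and $\bar v_i=(B\setminus\{b_1,\dots,b_i\})\cup\{a_1,\dots,a_i\}$ are bases, which is verbatim the serial symmetric exchange of Conjecture~\ref{11::22}. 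Reading this for every vertex and its complement, I would conclude that ``every complementary pair of $G$ is at distance exactly $n$'' is logically identical to Conjecture~\ref{11::22}; here one also uses that an arbitrary matroid with disjoint bases $A,B$ may be replaced by its restriction to $A\cup B$, a block matroid in which the bases inside $A\cup B$ are unchanged.

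It remains to pass from complementary pairs to arbitrary pairs, that is, to establish $d(v,w)\le n$ for all $v,w\in V$, and this is where I expect the real obstacle to lie. For a non-complementary pair one would put $I=v\cap w$ and $Z=S\setminus(v\cup w)$ and attempt to route a short walk that keeps $I$ inside and $Z$ outside; such a walk amounts to a serial exchange between $v\setminus I$ and $w\setminus I$ in a minor of rank $|v\setminus I|$ on $(v\cup w)\setminus(I\cup Z)$. The difficulty is that the base condition at an intermediate vertex is controlled by the contraction $M/I$, whereas the cobase condition at the same vertex is controlled by the \emph{different} contraction $M/Z$; the serial exchange supplied by Conjecture~\ref{11::22} governs only one of these two minors, so it does not automatically lift to a walk that stays inside $V$. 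Reconciling the two simultaneous base--cobase requirements---or, equivalently, proving directly that a farthest pair in $G$ may always be taken complementary---is the crux, and it is exactly this step that lies beyond the cases (uniform matroids, sparse paving matroids, and the low-rank situations treated in this paper) in which the serial symmetric exchange is already known.
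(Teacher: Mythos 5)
The statement you were given is one of the paper's conjectures, not one of its theorems: the paper offers no proof of Conjecture~\ref{11::33} (it is presented as a reformulation, following \cite{Cor93}, of Conjecture~\ref{11::22} for block matroids, with proofs cited only for graphic and transversal block matroids), and the problem is open in general. So there is no proof in the paper to compare yours against, and you were right not to manufacture one. What you do establish is correct: $\mathrm{diam}(G)\ge n$ follows from the fact that one exchange changes the symmetric difference with a fixed target by at most $2$ while $|v\bigtriangleup \bar v|=2n$; a length-$n$ path joining a complementary pair must delete an element of $A$ and insert an element of $B$ at every step; and such paths correspond exactly to serial symmetric exchanges of $A$ with $B=S\setminus A$, i.e.\ to Conjecture~\ref{11::22} for block matroids (restriction to $A\cup B$ reducing the general case to the block case, as the paper itself uses in the proof of Proposition~\ref{lem2}).

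Your sharpest observation is the one the paper passes over in silence: ``diameter equal to $n$'' also constrains non-complementary pairs $v,w$, and for an intermediate vertex $v_i$ of a prospective short walk the condition that $v_i$ be a base is governed by the minor $(M/I)|(X\cup Y)$ while the condition that $S\setminus v_i$ be a base is governed by the different minor $(M/Z)|(X\cup Y)$, where $I=v\cap w$, $Z=S\setminus(v\cup w)$, $X=v\setminus w$ and $Y=w\setminus v$. Conjecture~\ref{11::22} applied to either minor alone therefore does not automatically produce a walk that stays in $V$, so the passage from the complementary case to the full diameter statement is itself a nontrivial step rather than a routine translation. My one quibble is with calling this reduction ``the crux'': the complementary case is already wide open and is where all known partial results (including this paper's Theorem~\ref{thm1} and its rank-$4$ consequence) concentrate, so the two-minor issue is an additional obstruction rather than the central one. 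In sum, your proposal proves exactly the part of the statement that admits a proof --- the lower bound and the dictionary between the two conjectures --- and honestly identifies the remainder as open; it neither closes the conjecture nor could be expected to.
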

Conjecture~\ref{11::33} was proved for graphic block matroids by Farber, Richter and Shank \cite{FRS85} (with a modification by Weidemann \cite{Wei}), and independently by  Kajitani et al. \cite{Kaj88} and Cordovil and Moreira \cite{Cor93}. It was also proved for transversal block matroids by Farber \cite{Far89}. As far as we are aware, it is still unknown whether the base-cobase graph is connected for all block matroids.

\section{Some Definitions and Lemmas}
We begin with a few definitions and notations, some of which were introduced by Gabow \cite{Gabow76}. For convenience, we slightly modify the terminology used there.
Let $M$ be a matroid of rank $n$, and let $A=\{a_{1},a_{2},\ldots,a_{n}\}$ and $B=\{b_{1},b_{2},\ldots,b_{n}\}$ be two disjoint bases in $M$.
\begin{defn}
Let $X\subseteq A$ and $Y\subseteq B$ such that $|X|=|Y|=k$.

(i) The pair $(X,Y)$ is a \emph{serial exchange relative to the base $A$} if there exist orderings $X=\{a_{1}\prec a_{2}\prec\cdots \prec a_{k}\}$ and $Y=\{b_{1}\prec b_{2}\prec\cdots\prec b_{k}\}$ so that for each $i=1,2,\ldots,k$,  $(A\setminus\{a_{1},\ldots,a_{i}\})\cup\{b_{1},\ldots,b_{i}\}$ is a base.

(ii) The pair $(X,Y)$ is a \emph{serial symmetric exchange relative to the bases $A$ and $B$} if there exist orderings $X=\{a_{1}\prec a_{2}\prec\cdots\prec a_{k}\}$ and $Y=\{b_{1}\prec b_{2}\prec\cdots\prec b_{k}\}$ so that for each $i=1,2,\ldots,k$, both $(A\setminus\{a_{1},\ldots,a_{i}\})\cup\{b_{1},\ldots,b_{i}\}$ and $(B\setminus\{b_{1},\ldots,b_{i}\})\cup\{a_{1},\ldots,a_{i}\}$ are bases.
\end{defn}
When $X=\{a\}$ and $Y=\{b\}$ we call the pair $(a,b)$ a \emph{symmetric exchange relative to $A$ and $B$} and say that $a$ and $b$ are \emph{symmetrically exchangeable}.

Note that Conjecture~\ref{11::22} states that any pair of bases ($A$,$B$) of a matroid $M$ is a serial symmetric exchange (relative to themselves).

In terms of \cite{Gabow76} a serial symmetric exchange is a sequence of one-element sets $\{a_{i}\}$ and $\{b_{i}\}$, $i=1,\ldots,k$, that constitutes a serial $A$-exchange and a serial $B$-exchange simultaneously.

We list two well-known properties of symmetric exchanges:
\begin{obs}\label{obs4}
$a\in A$ and $b\in B$ are symmetrically exchangeable relative to $A$ and $B$ if and only if $a\in C(A,b)$ and $b\in C(B,a)$.
\end{obs}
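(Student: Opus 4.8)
The plan is to deduce the Observation from the classical single-element exchange characterization, applied separately to each of the two bases. By definition, $a\in A$ and $b\in B$ are symmetrically exchangeable relative to $A$ and $B$ exactly when both $A-a+b$ and $B-b+a$ are bases, and these two requirements involve the bases independently. Thus it suffices to establish the one-sided statement that, for a base $A$ and an element $b\notin A$, the set $A-a+b$ is a base if and only if $a\in C(A,b)$; applying the same fact with the roles of $A$ and $B$ (and of $a$ and $b$) interchanged shows that $B-b+a$ is a base if and only if $b\in C(B,a)$, and conjoining the two equivalences yields the claim.

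To prove the one-sided statement, I would first observe that since $A$ is a base of rank $n$ and $b\notin A$, the set $A+b$ still has rank $n$ and therefore contains a single circuit, which is precisely $C^{+}(A,b)=C(A,b)+b$. Note that every circuit of $A+b$ must contain $b$, since $A$ itself is independent. The candidate set $A-a+b$ has $n$ elements, so it is a base if and only if it is independent. If $a\in C(A,b)$, then removing $a$ destroys the unique circuit $C^{+}(A,b)$, so $A-a+b$ contains no circuit and is a base. Conversely, if $a\notin C(A,b)$, then $C^{+}(A,b)\subseteq A-a+b$, so $A-a+b$ is dependent and hence not a base.

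I expect no serious obstacle here: the content of the Observation is exactly that single symmetric exchanges in the two bases are governed, independently, by membership in the corresponding fundamental circuits. The only points requiring a little care are the uniqueness of the circuit contained in $A+b$ and the remark that this circuit must use $b$, both of which are standard consequences of $A$ being a base.
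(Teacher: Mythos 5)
Your proof is correct: the reduction to the one-sided statement that $A-a+b$ is a base iff $a\in C(A,b)$, via the uniqueness of the circuit in $A+b$, is the standard argument. The paper itself records this as a well-known observation and gives no proof, so there is nothing to compare against; your write-up supplies exactly the routine verification the authors omit.
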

\begin{obs}\label{Obs2}
Given two bases $A$ and $B$ of a matroid $M$, for each $a\in A$ there exists $b\in B$, so that $a$ is symmetrically exchangeable with $b$ relative to $A$ and $B$.
\end{obs}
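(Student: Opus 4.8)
The plan is to reformulate the desired conclusion through the circuit characterization of Observation~\ref{obs4}: for a fixed $a\in A$ it suffices to produce some $b\in B$ satisfying both $a\in C(A,b)$ and $b\in C(B,a)$. I would fix attention on the fundamental circuit $C^{+}(B,a)=C(B,a)+a$ and take the support $C(B,a)\subseteq B$ as the pool of candidates for $b$. Any choice of $b$ from this support already fulfils the condition $b\in C(B,a)$, so the entire problem collapses to showing that \emph{at least one} $b\in C(B,a)$ also satisfies $a\in C(A,b)$.

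To establish this I would argue by contradiction. Suppose that for every $b\in C(B,a)$ we have $a\notin C(A,b)$. Since $C(A,b)$ is the minimal subset of $A$ spanning $b$, the condition $a\notin C(A,b)$ forces $C(A,b)\subseteq A-a$, and hence every such $b$ is spanned by $A-a$. On the other hand, as $C(B,a)+a$ is a circuit while $C(B,a)\subseteq B$ is independent, the element $a$ is spanned by $C(B,a)$. Chaining these two facts by the transitivity of spanning, $a$ would be spanned by $A-a$, contradicting the independence of the base $A$. Therefore some $b\in C(B,a)$ satisfies $a\in C(A,b)$, and by Observation~\ref{obs4} this $b$ is symmetrically exchangeable with $a$.

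The step I expect to require the most care is the translation between the circuit language of Observation~\ref{obs4} and spanning: specifically, checking that $a\notin C(A,b)$ is genuinely equivalent to the statement that $b$ is spanned by $A-a$, and that membership of $a$ in the circuit $C^{+}(B,a)$ yields that $a$ is spanned by $C(B,a)$. Both are direct consequences of the definitions of support and circuit, but they must be recorded precisely so that the transitivity-of-spanning (equivalently, idempotence of closure) argument closes without a gap. Once these inclusions are in hand, the contradiction with the independence of $A$ is immediate, and there is no remaining difficulty.
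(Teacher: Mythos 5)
Your argument is correct: the paper states Observation~\ref{Obs2} without proof, as a well-known fact, and your closure/transitivity argument --- if every $b\in C(B,a)$ had $a\notin C(A,b)$, then each such $b$ would lie in $\mathrm{cl}(A-a)$, whence $a\in\mathrm{cl}(C(B,a))\subseteq\mathrm{cl}(A-a)$, contradicting the independence of $A$ --- is exactly the standard proof of the symmetric exchange property via Observation~\ref{obs4}. The only implicit hypothesis is that $a\notin B$ and $C(B,a)\cap A=\emptyset$ so that the supports $C(B,a)$ and $C(A,b)$ are defined, which holds in the disjoint-bases setting used throughout the paper.
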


\begin{defn}
We call two symmetric exchanges $(a_{1},b_{1})$ and $(a_{2},b_{2})$ (both relative to the same two bases) \emph{disjoint} if $a_{1}\neq a_{2}$ and $b_{1}\neq b_{2}$.
\end{defn}

Assume that all elements of $A$ are symmetrically exchangeable with the same element $b\in B$. Let $b'\neq b$ be another element of $B$. By Observation~\ref{Obs2} there exists $a'\in A$ so that $(b',a')$ is a symmetric exchange relative to $B$ and $A$. Let $a\in A$ be such that $a\neq a'$. Then $(a,b)$ and $(a',b')$ are two disjoint symmetric exchanges. We conclude:
\begin{prop}\label{obs3}
If $A$ and $B$ are two disjoint bases of a matroid $M$ with $\rho(M)>1$, then there always exist at least two disjoint symmetric exchanges relative to $A$ and $B$.
\end{prop}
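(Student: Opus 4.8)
The plan is to phrase the question in the language of a bipartite \emph{exchange graph}. Form the bipartite graph $H$ with parts $A$ and $B$, joining $a\in A$ to $b\in B$ by an edge precisely when $a$ and $b$ are symmetrically exchangeable relative to $A$ and $B$. A pair of disjoint symmetric exchanges is then exactly a matching of size two in $H$, so the goal becomes: show that $H$ contains two vertex-disjoint edges.

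The first step is to record that every vertex of $H$ has degree at least one. For vertices in $A$ this is immediate from Observation~\ref{Obs2}. Because symmetric exchangeability is a symmetric relation — by Observation~\ref{obs4} it is the conjunction of the symmetric conditions $a\in C(A,b)$ and $b\in C(B,a)$ — Observation~\ref{Obs2} applies verbatim with the roles of $A$ and $B$ interchanged, so every vertex in $B$ also has degree at least one. Thus $H$ has no isolated vertices, and since $\rho(M)=n>1$ we have $|A|=|B|=n\geq 2$. Next I would argue by contradiction: suppose $H$ has no two disjoint edges, i.e. every two edges share an endpoint. A family of pairwise-intersecting edges in a graph is either a star (all edges through one common vertex) or a triangle, but $H$ is bipartite and hence triangle-free, so all edges of $H$ must pass through a single vertex $v$. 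If $v\in A$, then every element of $A$ other than $v$ is isolated in $H$; since $n\geq 2$ such an element exists, contradicting that every $A$-vertex has degree at least one. The case $v\in B$ is ruled out identically using the $B$-side degree bound. This contradiction yields the required matching of size two.

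The only real obstacle is the degenerate configuration in which the exchange graph collapses to a star — that is, when (say) every element of $A$ is exchangeable with one and the same $b\in B$. The whole weight of the argument rests on the symmetry of Observation~\ref{Obs2}: without the guarantee that a second element $b'\neq b$ of $B$ also has a partner $a'\in A$, one could not manufacture the second, disjoint exchange. Concretely, in this worst case I would take $b'\neq b$, invoke the role-swapped Observation~\ref{Obs2} to obtain $a'$ with $(a',b')$ a symmetric exchange, and then choose any $a\in A$ with $a\neq a'$ (possible since $n\geq 2$), so that $(a,b)$ and $(a',b')$ are two disjoint symmetric exchanges. Everything else is bookkeeping.
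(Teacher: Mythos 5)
Your proof is correct and follows essentially the same route as the paper: the paper's argument (given in the paragraph preceding the proposition) is exactly your ``worst case'' — reduce to the situation where all edges of the exchange graph meet a single $b\in B$, then apply the role-swapped Observation~\ref{Obs2} to some $b'\neq b$ to manufacture a disjoint second exchange. Your explicit matching/star formulation just makes the reduction to that case more systematic; the key step is identical.
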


Now suppose $\rho(M)=3$. Let $A=\{a_{1},a_{2},a_{3}\}$ and $B=\{b_{1},b_{2},b_{3}\}$. By Proposition~\ref{obs3} we may assume that $(a_{1},b_{1})$ and $(a_{3},b_{3})$ are disjoint symmetric exchanges relative to $A$ and $B$. Exchanging the pair $(a_{1},b_{1})$ relative to $A$ and $B$ we obtain the two bases $A'=\{b_{1},a_{2},a_{3}\}$ and $B'=\{a_{1},b_{2},b_{3}\}$. Exchanging the pair $(a_{3},b_{3})$ relative to $A$ and $B$ we obtain the two bases $A''=\{b_{1},b_{2},a_{3}\}$ and $B''=\{a_{1},a_{2},b_{3}\}$. $A''$ and $B''$ can also be obtained by performing the exchange $(a_{2},b_{2})$ relative to $A'$ and $B'$. Thus, as is already known (see \cite{Gabow76}), Conjecture~\ref{11::22} holds in the case where $\rho(M)=3$.

\vspace{5 mm}
The following lemma, also known as the circuit elimination axiom, (see \cite{Welsh76} or \cite{Oxley11}) will be used frequently here for proving exchange properties of bases:
\begin{lem}\label{lem1}
If $C_{1}$ and $C_{2}$ are two circuits so that $x\in C_{1}\cap C_{2}$ and $y\in C_{1}\setminus C_{2}$, then there exists a circuit $C_{3}\subset C_{1}\cup C_{2}$ such that $x\not\in C_{3}$ and $y\in C_{3}$.
\end{lem}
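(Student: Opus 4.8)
The plan is to derive the statement from the basic rank and spanning axioms rather than to manipulate the two circuits directly, since the real content of the \emph{strong} form (as opposed to weak elimination, which merely produces \emph{some} circuit avoiding $x$) is the guarantee that the surviving circuit still contains the prescribed element $y$. I set $Z=(C_{1}\cup C_{2})-x-y$, so that the target is a circuit $C_{3}$ with $y\in C_{3}\subseteq Z+y=(C_{1}\cup C_{2})-x$. Because $y\notin Z$, this is equivalent to showing that $y$ is spanned by $Z$, and the whole proof reduces to establishing that single spanning relation.

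First I would record two elementary spanning facts coming from the hypothesis that $C_{1}$ and $C_{2}$ are circuits. Since $C_{1}$ is a circuit through $y$, the independent set $C_{1}-y$ spans $y$; and because $x\in C_{1}$ while $y\in C_{1}\setminus C_{2}$, every element of $C_{1}-y$ other than $x$ lies in $Z$, so $C_{1}-y\subseteq Z+x$ and hence $y$ is spanned by $Z+x$. Symmetrically, $C_{2}$ is a circuit through $x$, so $C_{2}-x$ spans $x$; and since $y\notin C_{2}$, every element of $C_{2}-x$ lies in $Z$, giving $C_{2}-x\subseteq Z$ and therefore $x$ is spanned by $Z$.

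These two relations combine by the transitivity (exchange) property of the span: as $x$ is spanned by $Z$, adjoining $x$ to $Z$ changes neither its rank nor the set of elements it spans, so $\rho(Z+x)=\rho(Z)$ and everything spanned by $Z+x$ is already spanned by $Z$. Since $y$ is spanned by $Z+x$, we conclude that $y$ is spanned by $Z$. To extract a genuine circuit I would take a maximal independent subset $I\subseteq Z$; then $I$ has the same rank as $Z$ and, because $y$ is spanned by $Z$, also as $Z+y$, so $I$ is maximal independent in $Z+y$ while $y\notin I$. Thus $I+y$ is dependent, and as $I$ is independent the set $I+y$ contains the unique circuit $C^{+}(I,y)=C(I,y)+y$; this circuit satisfies $C^{+}(I,y)\subseteq I+y\subseteq Z+y=(C_{1}\cup C_{2})-x$, contains $y$, and misses $x$. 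Taking $C_{3}=C^{+}(I,y)$ finishes the argument.

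The step I expect to be the crux is the transitivity of the span invoked in the third paragraph — that a spanned element may be deleted without changing the span — since this is exactly what upgrades weak elimination (which need not preserve $y$) to the strong form; the rest is bookkeeping about which elements of $C_{1}$ and $C_{2}$ survive into $Z$. A more self-contained alternative would be a minimal-counterexample induction on $\abs{C_{1}\cup C_{2}}$: apply weak elimination to obtain a circuit $D\subseteq(C_{1}\cup C_{2})-x$, assume $y\notin D$, locate an element $g\in D\cap(C_{1}\setminus C_{2})$, and recurse on a smaller pair of circuits. There the main obstacle is arranging a \emph{strict} decrease of $\abs{C_{1}\cup C_{2}}$ at the recursive step, which is precisely the difficulty the span-based proof sidesteps.
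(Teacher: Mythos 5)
Your proof is correct. Note first that the paper offers no proof of this lemma at all: it is the strong circuit elimination property, quoted as standard with a pointer to Welsh and Oxley, so there is nothing in the paper to compare your argument against line by line. What you have written is the standard closure-theoretic proof: reduce the claim to showing that $y$ lies in the span of $Z=(C_{1}\cup C_{2})-x-y$, obtain $y\in\mathrm{span}(Z+x)$ from the circuit $C_{1}$ and $x\in\mathrm{span}(Z)$ from the circuit $C_{2}$ (using $y\notin C_{2}$), and combine them via idempotence of the closure operator; the extraction of an actual circuit through $y$ inside $Z+y$ from a maximal independent subset $I\subseteq Z$ is handled correctly, including the observation that any circuit in $I+y$ must contain $y$ because $I$ itself is independent. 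The one step you rightly flag as the crux --- that adjoining a spanned element changes neither the rank nor the span of a set --- is itself a nontrivial consequence of the matroid axioms (Mac Lane--Steinitz exchange, or submodularity of rank), so in a fully self-contained treatment it would deserve its own justification; but it is exactly as standard as the lemma being proved, and invoking it is no worse than the paper's own decision to cite the whole statement from the textbooks. The alternative you sketch at the end (weak elimination plus induction on $\abs{C_{1}\cup C_{2}}$) is the route taken in Oxley's book; your span-based argument avoids the delicate bookkeeping needed there to force a strict decrease, at the cost of leaning on closure properties. Either is acceptable; your write-up has no gap.
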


We look at the directed bipartite graph whose parts are $A$ and $B$ and there is an edge from $a \in A$ to $b\in B$ if and only if $b\in C(B,a)$ and an edge from $b\in B$ to $a\in A$ if and only if $a\in C(A,b)$. For $a', a''\in A$, we look at the directed paths of length two from $a'$ to $a''$ and consider the middle elements of these paths (the elements of $B$ that "connect" $a'$ to $a''$), for which we introduce the following notation:
\begin{notn}\label{notn1}
Let $A$ and $B$ be two disjoint bases of a matroid $M$ and let $a',a''\in A$ be two distinct elements. Let
\begin{equation*}
Conn(a',a'',A,B)=\{b\in B|b\in C(B,a')\;\textrm{and}\;a''\in C(A,b)\}
\end{equation*}
\end{notn}

\begin{prop}\label{lem2}
For any $a', a''\in A$, $|Conn(a',a'',A,B)|\neq1$
\end{prop}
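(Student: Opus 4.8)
The plan is to recast the definition of $Conn(a',a'',A,B)$ as a statement about a single circuit and a single hyperplane, after which the result follows from an elementary closure argument. I would set $H$ to be the set of all elements of $S$ that are spanned by $A-a''$. Since $A$ is a base and $a''\in A$, the set $A-a''$ is independent of rank $n-1$, so $H$ is a hyperplane; in particular $H$ is a flat, meaning that any element spanned by a subset of $H$ already belongs to $H$. The one observation I would record immediately is that $a'\in H$: indeed $a'\neq a''$ forces $a'\in A-a''$, and every element of a set is spanned by it.

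Next I would translate the two conditions defining membership in $Conn(a',a'',A,B)$. The condition $b\in C(B,a')$ says precisely that $b$ is one of the $B$-elements of the fundamental circuit $C^{+}(B,a')=C(B,a')+a'$. For the second condition I would use the standard characterization of fundamental circuits: for $a''\in A$ one has $a''\in C(A,b)$ if and only if $(A-a'')+b$ is a base, and, since $A-a''$ has rank $n-1$, this happens if and only if $b$ is \emph{not} spanned by $A-a''$, i.e. $b\notin H$. Combining the two translations, and recalling $a'\in H$, I obtain
\[
Conn(a',a'',A,B)=\{x\in C^{+}(B,a'): x\notin H\};
\]
the vertex $a'$ of the circuit is discarded automatically because $a'\in H$, so the only elements of $C^{+}(B,a')$ that can fall outside $H$ are the elements of $C(B,a')\subseteq B$.

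The final and decisive step is the elementary fact that a circuit can never meet the complement of a flat in exactly one element. Suppose, for a circuit $C$ and a flat $H$, that all elements of $C$ but one, say $x_{0}$, lie in $H$. Because $C$ is minimally dependent, $x_{0}$ is spanned by $C-x_{0}$; and because $C-x_{0}\subseteq H$ with $H$ a flat, everything spanned by $C-x_{0}$ already lies in $H$. Hence $x_{0}\in H$, a contradiction. Therefore the number of elements of any circuit lying outside a flat is $0$ or at least $2$. Applying this to $C=C^{+}(B,a')$ and the hyperplane $H$ yields $\abs{Conn(a',a'',A,B)}\neq1$, as desired.

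The work is essentially front-loaded: I expect the only point requiring care to be the equivalence $a''\in C(A,b)\iff b\notin H$, together with the remark $a'\in H$. Once these are in place, the closure argument closes the proof at once, and — in contrast to most exchange arguments in this area — no appeal to the circuit elimination axiom (Lemma~\ref{lem1}) is needed.
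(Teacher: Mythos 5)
Your proof is correct and is essentially the paper's argument: the paper identifies $Conn(a',a'',A,B)$ as the intersection of the circuit $C^{+}(B,a')$ with the fundamental cocircuit of $a''$ relative to $A$ --- which is exactly the complement of your hyperplane $H$ --- and then cites the fact that a circuit and a cocircuit never meet in a single element. The only difference is that you prove that fact from scratch via the closure argument (and thereby avoid the paper's preliminary restriction to $A\cup B$), rather than quoting it.
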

\begin{proof}
By restricting $M$ to $A\cup B$ we may assume that $M$ is a block matroid. Thus $Conn(a',a'',A,B)$ is the intersection of the circuit $C^+(B,a')$ and the cocircuit $\{b\in B| a''\in C(b,A)\}\cup\{a''\}$. The result follows from the fact that the intersection of a circuit and a cocircuit is never a singleton.
\end{proof}
When a serial exchange relative to the base $B$ is carried out and some $a_{i}$s replace $b_{i}$s in $B$, one by one, it is natural to ask how the ``serial'' supports $C(B-b_{1}+a_{1}-\ldots-b_{i-1}+a_{i-1},a_{i})$ are related to the ``original'' supports $C(B,a_{i})$.
The following lemma, which may have its own interest, describes such a relation:
\begin{lem}\label{lem2:5}
Let $(X=\{a_{1}\prec \cdots\prec a_{m}\}$,$Y=\{b_{1}\prec\cdots\prec b_{m}\})$ be a serial exchange relative to $B$. Let $A_{0}=B_{0}=\emptyset$ and for $k=1,\dots,m$ let $A_{k}=\{a_{1}\prec\cdots\prec a_{k}\}$ and $B_{k}=\{b_{1}\prec\cdots\prec b_{k}\}$. Then
\begin{equation}\label{eq2:1}
\bigcup_{i=1}^{k}C(B,a_{i})=\bigcup_{i=1}^{k}C((B\setminus B_{i-1})\cup A_{i-1},a_{i})\cap B\hspace{10 mm}(k=1,\ldots,m).
\end{equation}
\begin{proof}
We prove, by induction on $k$, that the set on left of (\ref{eq2:1}) is contained in the set on the right. Let $b\in \bigcup_{i=1}^{k}C(B,a_{i})$. If $b\in\bigcup_{i=1}^{k-1}C(B,a_{i})$, then $b\in \bigcup_{i=1}^{k-1}C((B\setminus B_{i-1})\cup A_{i-1},a_{i})$ by the induction hypothesis. Hence we may assume that $b\not\in C(B,a_{i})$ for all $i<k$ and $b\in C(B,a_{k})$. If $b\in C((B\setminus B_{k-1})\cup A_{k-1},a_{k})$, then we are done, so we assume $b\in C(B,a_{k})\setminus C((B\setminus B_{k-1})\cup A_{k-1}, a_{k})$. We now construct a recursively defined sequence of circuits $D_i$, $i=k-1,k-2,\ldots$. By Lemma~\ref{lem1}, there is a circuit $D_{k-1}$ in $C(B,a_{k})\cup C((B\setminus B_{k-1})\cup A_{k-1}, a_{k})$ such that $b\in D_{k-1}$, but $a_{k}\not\in D_{k-1}$. Hence $D_{k-1}\subseteq A_{k-1}\cup B$. If $b\in C((B\setminus B_{k-2})\cup A_{k-2}, a_{k-1})$, then we are done. So we assume that $b\in D_{k-1}\setminus C((B\setminus B_{k-2})\cup A_{k-2}, a_{k-1})$. If $a_{k-1}\not\in D_{k-1}$ let $D_{k-2}=D_{k-1}$. Otherwise, we apply Lemma~\ref{lem1} on the circuits $D_{k-1}$ and $C^{+}((B\setminus B_{k-2})\cup A_{k-2}, a_{k-1})$ to obtain a circuit $D_{k-2}$ containing $b$ and excluding $a_{k-1}$. We proceed in the same manner, obtaining a sequence of circuits $D_{k-i}\subset A_{k-i}$ for $i=1,2,\ldots$. The process must terminate by finding some $j<k$ such that $b\in C((B\setminus B_{j-1})\cup A_{j-1}, a_{j})$. Otherwise we reach a contradiction by obtaining a circuit $D_{j}$ containing only one element $a_{i}$ with $i<k$, contradicting the assumption that $b\not\in C(B,a_{i})$ for all $i<k$.
The opposite containment is proved in a similar manner and is left for the reader.
\end{proof}
\end{lem}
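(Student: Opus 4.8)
The plan is to establish the two inclusions of (\ref{eq2:1}) separately, in each case reducing a support computed in one base to a union of supports computed in the other by peeling off the inserted elements $a_{1},\ldots,a_{i-1}$ one at a time with the circuit elimination axiom (Lemma~\ref{lem1}). The two circuits that drive everything are the \emph{original} circuit $C^{+}(B,a_{i})$ and the \emph{serial} circuit $C^{+}((B\setminus B_{i-1})\cup A_{i-1},a_{i})$; both pass through $a_{i}$, which is exactly what makes Lemma~\ref{lem1} applicable.

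For the inclusion of the left-hand side in the right-hand side I would induct on $k$. The case $k=1$ is immediate, since $A_{0}=B_{0}=\emptyset$ makes both supports equal to $C(B,a_{1})$. For the step, take $b$ in the left-hand union; by the induction hypothesis I may assume $b\in C(B,a_{k})$ but $b\notin C(B,a_{i})$ for every $i<k$, and I may assume $b\notin C((B\setminus B_{k-1})\cup A_{k-1},a_{k})$, as otherwise there is nothing to prove. Since $C^{+}(B,a_{k})$ and $C^{+}((B\setminus B_{k-1})\cup A_{k-1},a_{k})$ are both circuits through $a_{k}$ and only the former contains $b$, Lemma~\ref{lem1} yields a circuit $D_{k-1}$ with $b\in D_{k-1}$, $a_{k}\notin D_{k-1}$, and $D_{k-1}\subseteq A_{k-1}\cup B$.

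The core of the argument is then a descending recursion producing circuits $D_{j}\subseteq A_{j}\cup B$ through $b$. Given $D_{j}$: if $b$ already lies in the serial support $C((B\setminus B_{j-1})\cup A_{j-1},a_{j})$ I stop; if not and $a_{j}\notin D_{j}$ I set $D_{j-1}=D_{j}$; and if $a_{j}\in D_{j}$ I eliminate $a_{j}$ against $C^{+}((B\setminus B_{j-1})\cup A_{j-1},a_{j})$ via Lemma~\ref{lem1}, retaining $b$, to obtain $D_{j-1}\subseteq A_{j-1}\cup B$. The recursion must terminate: either it halts by placing $b$ in some serial support, or it reaches a circuit containing exactly one inserted element $a_{i}$ with $i<k$; such a circuit lies in $B+a_{i}$, hence equals $C^{+}(B,a_{i})$, forcing $b\in C(B,a_{i})$ and contradicting $b\notin C(B,a_{i})$ for $i<k$. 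Either way $b$ lands in the right-hand union. The reverse inclusion I would prove by the mirror recursion, interchanging the roles of the original and serial circuits throughout.

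The step I expect to require the most care is the bookkeeping inside the recursion: I must verify that eliminating $a_{j}$ never reintroduces an inserted element of index $\ge j$, so that the descent is genuine and the inclusions $D_{j-1}\subseteq A_{j-1}\cup B$ persist, and that $b$ survives every elimination. Both facts follow from the observation that the $j$-th serial circuit is supported on $\bigl((B\setminus B_{j-1})\cup A_{j-1}\bigr)+a_{j}$, together with the explicit choice of $x=a_{j}$ and $y=b$ when invoking Lemma~\ref{lem1}; once these invariants are checked, termination and the singleton contradiction are routine.
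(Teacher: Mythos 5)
Your proposal is correct and follows essentially the same route as the paper: induction on $k$ for one inclusion, circuit elimination against $C^{+}(B,a_{k})$ and the serial circuit to produce $D_{k-1}$, then the descending recursion eliminating each $a_{j}$ in turn, terminating either by landing $b$ in a serial support or by reaching a circuit with a single inserted element, which forces $b\in C(B,a_{i})$ for some $i<k$ and gives the contradiction. The invariants you flag ($D_{j}\subseteq A_{j}\cup B$ and the survival of $b$) are exactly the ones the paper relies on, and your justification for them is the same.
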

The following lemma relates spanning sets before and after performing an exchange. It is an extension of Lemma 2 in \cite{Gabow76}:
\begin{lem}\label{lem3}
Let $a_{1},a_{2}\in A$ and $b_{1},b_{2}\in B$. Suppose $B'=B-b_{1}+a_{1}$ is a base and either $b_{2}\not\in C(B,a_{1})$ or $b_{1}\not\in C(B,a_{2})$. Then $b_{2}\in C(B',a_{2})$ if and only if $b_{2}\in C(B,a_{2})$.
\end{lem}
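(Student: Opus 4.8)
The plan is to argue entirely with fundamental circuits, using two standard facts: for $a\in A$ and $b\in B$ one has $b\in C(B,a)$ if and only if $B-b+a$ is a base, and $C^{+}(B,a)$ is the \emph{unique} circuit contained in $B+a$. The hypothesis that $B'=B-b_{1}+a_{1}$ is a base is exactly the statement $b_{1}\in C(B,a_{1})$, which I will use freely (and I may assume $a_{1}\neq a_{2}$, the statement being otherwise degenerate). I would first dispose of the case $b_{1}\notin C(B,a_{2})$: here $C^{+}(B,a_{2})\subseteq B-b_{1}+a_{2}\subseteq B'+a_{2}$, so this circuit sits inside $B'+a_{2}$ and, by uniqueness of the fundamental circuit, coincides with $C^{+}(B',a_{2})$. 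Hence $C(B,a_{2})=C(B',a_{2})$ and the equivalence is immediate. Note the extra hypothesis is automatically satisfied here, since its second disjunct $b_{1}\notin C(B,a_{2})$ holds.

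This reduces matters to the case $b_{1}\in C(B,a_{2})$, in which the hypothesis forces its first disjunct, namely $b_{2}\notin C(B,a_{1})$; since $b_{2}\neq a_{1}$ this is equivalent to $b_{2}\notin C^{+}(B,a_{1})$, and this single fact drives both implications. For the direction $b_{2}\in C(B',a_{2})\Rightarrow b_{2}\in C(B,a_{2})$, I would apply the circuit elimination axiom (Lemma~\ref{lem1}) to the circuits $C^{+}(B,a_{2})$ and $C^{+}(B,a_{1})$, which share $b_{1}$, eliminating $b_{1}$ while keeping $a_{2}$ (legitimate because $a_{2}\notin C^{+}(B,a_{1})$). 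The resulting circuit $C_{3}$ satisfies $a_{2}\in C_{3}$, $b_{1}\notin C_{3}$ and $C_{3}\subseteq C^{+}(B,a_{2})\cup C^{+}(B,a_{1})\subseteq B+a_{1}+a_{2}$; avoiding $b_{1}$ it lies in $B'+a_{2}$, so by uniqueness $C_{3}=C^{+}(B',a_{2})$. If now $b_{2}\in C(B',a_{2})$ then $b_{2}\in C_{3}\subseteq C^{+}(B,a_{2})\cup C^{+}(B,a_{1})$, and since $b_{2}\notin C^{+}(B,a_{1})$ we conclude $b_{2}\in C^{+}(B,a_{2})$, i.e. $b_{2}\in C(B,a_{2})$.

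For the converse I would run the same elimination in reverse. I first observe that $a_{1}\in C^{+}(B',a_{2})$: otherwise $C^{+}(B',a_{2})\subseteq B-b_{1}+a_{2}\subseteq B+a_{2}$ would force $C^{+}(B',a_{2})=C^{+}(B,a_{2})$ and hence $C(B,a_{2})=C(B',a_{2})$, contradicting $b_{1}\in C(B,a_{2})$ together with $b_{1}\notin B'$. Now both $C^{+}(B',a_{2})$ and $C^{+}(B,a_{1})$ contain $a_{1}$, so eliminating $a_{1}$ while keeping $a_{2}$ yields a circuit $E$ with $a_{2}\in E$, $a_{1}\notin E$ and $E\subseteq C^{+}(B',a_{2})\cup C^{+}(B,a_{1})\subseteq B+a_{1}+a_{2}$; avoiding $a_{1}$ it lies in $B+a_{2}$, so $E=C^{+}(B,a_{2})$. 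If $b_{2}\notin C(B',a_{2})$ then $b_{2}\notin C^{+}(B',a_{2})$, and combined with $b_{2}\notin C^{+}(B,a_{1})$ this gives $b_{2}\notin E=C^{+}(B,a_{2})$, i.e. $b_{2}\notin C(B,a_{2})$ — the contrapositive of the remaining implication. The crux I would expect to be this reverse direction: before the elimination can even be invoked one must certify that $a_{1}$ genuinely appears in $C^{+}(B',a_{2})$, and it is precisely the case assumption $b_{1}\in C(B,a_{2})$ that guarantees it.
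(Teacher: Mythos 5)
Your proof is correct, and it takes a genuinely different route from the paper's. The paper proves only one implication explicitly (and leaves the converse ``to the reader''): it argues by contraposition that if $b_{2}\in C(B,a_{2})$ but $b_{2}\not\in C(B',a_{2})$ then both $b_{1}\in C(B,a_{2})$ and $b_{2}\in C(B,a_{1})$ must hold, and it extracts the second of these from the $k=2$ instance of Lemma~\ref{lem2:5}, i.e.\ from the identity $C(B,a_{1})\cup C(B,a_{2})=C(B,a_{1})\cup\bigl(C(B',a_{2})\cap B\bigr)$. You instead bypass Lemma~\ref{lem2:5} entirely and work at the level of fundamental circuits: after disposing of the easy case $b_{1}\notin C(B,a_{2})$ (where $C^{+}(B,a_{2})$ survives into $B'+a_{2}$ unchanged), you use a single application of circuit elimination in each direction together with the uniqueness of the circuit in (base $+$ element) to identify $C^{+}(B',a_{2})$ as a subset of $C^{+}(B,a_{2})\cup C^{+}(B,a_{1})$, and conversely $C^{+}(B,a_{2})$ as a subset of $C^{+}(B',a_{2})\cup C^{+}(B,a_{1})$; the case hypothesis $b_{2}\notin C^{+}(B,a_{1})$ then transfers membership of $b_{2}$ between the two fundamental circuits in both directions. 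Your key observation for the converse --- that $b_{1}\in C(B,a_{2})$ forces $a_{1}\in C^{+}(B',a_{2})$, which licenses the second elimination --- is exactly the point the paper glosses over by deferring that direction to the reader. The paper's approach buys economy by reusing an already-proved structural lemma; yours buys self-containedness, symmetry between the two directions, and a complete argument where the paper gives only half of one.
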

\begin{proof}
We show that if $b_{2}\in C(B,a_{2})$ and $b_{2}\not\in C(B',a_{2})$, then $b_{1}\in C(B,a_{2})$ and $b_{2}\in C(B,a_{1})$. Clearly, $b_{1}\in C(B,a_{2})$, otherwise $C(B',a_{2})=C(B,a_{2})$, contrary to our assumption. Since $b_{2}\in C(B,a_{2})$, $b_{2}$ is contained in the left hand side of (\ref{eq2:1}) (with $k=2$). When $k=2$ the right hand side of (\ref{eq2:1}) consists of two terms: $C(B,a_{1})$ and $C(B-b_{1}+a_{1},a_{2})\cap B$. Since we assumed that $b_{2}$ is not contained in the second term, it must be in the first term, namely $b_{2}\in C(B,a_{1})$. The other direction, where we assume that $b_{2}\not\in C(B,a_{2})$ and $b_{2}\in C(B',a_{2})$, is handled similarly and is left to the reader.
\end{proof}

The last lemma in this section states that after performing a symmetric exchange the inserted element inherits its support  and the set of elements it supports from the original one.
\begin{lem}\label{lem4}
Suppose that $a\in A$ and $b\in B$ are symmetrically exchangeable relative to $A$ and $B$, and let $b'\in B-b$. Then
\begin{enumerate}
\item[\Text{(i)}]  $C(B-b+a,b)=C(B,a)-b+a$; and
\item[\Text{(ii)}]  $b\in C(A-a+b,b')$ if and only if $a\in C(A,b')$
\end{enumerate}
\end{lem}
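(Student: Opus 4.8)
The plan is to derive both parts from a single basic fact: if $D$ is a base and $x \notin D$, then $D + x$ contains exactly one circuit, namely the fundamental circuit $C^{+}(D,x)=C(D,x)+x$, and this circuit necessarily contains $x$. Since symmetric exchangeability gives, via Observation~\ref{obs4}, that $a\in C(A,b)$ and $b\in C(B,a)$, both $A' := A-a+b$ and $B' := B-b+a$ are bases; this is what lets me invoke the uniqueness fact for the modified bases.

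For part (i), I would observe the set identity $B'+b=(B-b+a)+b=B+a$. The left-hand side is ``the base $B'$ together with the external element $b$,'' so its unique circuit is $C^{+}(B',b)$, while the right-hand side is ``the base $B$ together with the external element $a$,'' so its unique circuit is $C^{+}(B,a)$. As the two sets coincide, so do their unique circuits: $C^{+}(B-b+a,b)=C^{+}(B,a)$, that is, $C(B-b+a,b)+b=C(B,a)+a$. Since $b\in C(B,a)$ while $b\notin C(B-b+a,b)$ (the latter being a subset of the base $B-b+a$, which excludes $b$), subtracting $b$ from both sides yields $C(B-b+a,b)=C(B,a)-b+a$, which is exactly (i).

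For part (ii), writing $A'=A-a+b$ (a base) and noting $b'\notin A'$ since $b'\neq b$ and $b'\in B$ is disjoint from $A$, I would characterize membership of $b$ in $C(A',b')$ through the fundamental circuit $C^{+}(A',b')$, the unique circuit in $A'+b'$. The key observation is that $b\notin A'-b+b'=(A-a)+b'$ (because $b\notin A$, $b\neq a$, and $b\neq b'$), so $b\in C^{+}(A',b')$ if and only if $C^{+}(A',b')$ is \emph{not} contained in $(A-a)+b'$, which in turn happens if and only if $b'$ is not spanned by $A-a$. Finally I would establish that $b'$ is spanned by $A-a$ exactly when $a\notin C(A,b')$: if $a\notin C(A,b')$ then $C(A,b')\subseteq A-a$ already spans $b'$, and conversely any circuit through $b'$ inside $(A-a)+b'\subseteq A+b'$ must, by uniqueness, equal $C^{+}(A,b')$, forcing $a\notin C^{+}(A,b')$. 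Chaining these equivalences gives $b\in C(A',b')\iff a\in C(A,b')$.

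The arguments are short, and the only real obstacle is the careful bookkeeping in (ii): keeping the identities $A'+b'=(A-a)+b+b'$ and $A'-b=A-a$ straight, verifying the disjointness facts $b\notin A$, $b\neq a$, $b\neq b'$ that place $b$ outside $(A-a)+b'$, and—most importantly—justifying the equivalence between ``$b'$ is spanned by $A-a$'' and ``$a\notin C(A,b')$'' by appealing to the uniqueness of the fundamental circuit in $A+b'$. Part (i), by contrast, is essentially immediate once the set identity $B'+b=B+a$ is spotted.
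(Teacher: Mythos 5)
Your proposal is correct and follows essentially the same route as the paper: part (i) is exactly the paper's one-line argument that $C(B,a)+a$ and $C(B-b+a,b)+b$ are the unique circuit of the common set $B+a=(B-b+a)+b$, and part (ii) is the paper's observation that the fundamental circuit of $b'$ either lies entirely in the common part $A-a$ of $A$ and $A-a+b$ or misses it, which you make precise via the equivalence with ``$b'$ is spanned by $A-a$.'' No gaps; your version of (ii) is simply a more explicitly justified rendering of the paper's ``the support of $b'$ remains unchanged'' step.
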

\begin{proof}
(i) Note that $C(B,a)+a=C(B-b+a,b)+b$. Subtracting $b$ we get (i).

(ii) Suppose that $a\not\in C(A,b')$. Then the support of $b'$ in A remains unchanged after replacing $a$ with $b$. Thus $b\not\in C(A-a+b,b')$. To show the other direction, suppose $b\not\in C(A-a+b,b')$. We go back and replace $b$ with $a$. Again, the support of $b'$ remains unchanged so $a\not\in C(A,b')$.
\end{proof}

\section{Serial Symmetric Exchanges}
A well-known and basic result on symmetric exchanges between subsets of two bases is the following lemma (\cite{Bry}, \cite{Greene}, \cite{Wood}):
\begin{lem}\label{BElem}
Let $A$ and $B$ be two bases of a matroid $M$. For any $A_{1}\subset A$ there exists $B_{1}\subset B$ such that $(A\setminus A_{1})\cup B_{1}$ and $(B\setminus B_{1})\cup A_{1}$ are bases.
\end{lem}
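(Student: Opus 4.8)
The plan is to recast the lemma as a base-packing (matroid union) problem on the set $B$ and to discharge the resulting covering condition by a one-line submodularity estimate.

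First I would reinterpret the two target conditions through contractions. Put $A_2=A\setminus A_1$ and $k=|A_1|$. Since $A_2$ is independent, $(A\setminus A_1)\cup B_1=A_2\cup B_1$ is a base of $M$ precisely when $B_1$ is a base of the contraction $M/A_2$, a matroid of rank $n-|A_2|=k$; symmetrically, $(B\setminus B_1)\cup A_1$ is a base of $M$ precisely when $B\setminus B_1$ is a base of $M/A_1$, of rank $n-k$. Restricting these two contractions to the ground set $B$ yields matroids $N_1$ and $N_2$ on $B$ with $\rho_{N_1}(B)=k$ and $\rho_{N_2}(B)=n-k$. Thus it suffices to partition $B$ into an independent set of $N_1$ and an independent set of $N_2$: because the two ranks sum to $|B|=n$, any such partition is forced to split $B$ into a \emph{base} of $N_1$ and a \emph{base} of $N_2$ (and these bases avoid the loops of $N_i$, i.e.\ avoid $A_2$ and $A_1$ respectively), which is exactly the $B_1$ we want.

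Next I would apply the matroid union theorem: $B$ is coverable by an independent set of $N_1$ and one of $N_2$ iff $|X|\le\rho_{N_1}(X)+\rho_{N_2}(X)$ for every $X\subseteq B$. Expressing the contraction ranks through the rank function $\rho$ of $M$, namely $\rho_{N_1}(X)=\rho(X\cup A_2)-(n-k)$ and $\rho_{N_2}(X)=\rho(X\cup A_1)-k$, this is equivalent to
\begin{equation*}
\rho(X\cup A_1)+\rho(X\cup A_2)\ge n+|X|\qquad(X\subseteq B).
\end{equation*}
I would then verify this by submodularity of $\rho$. Since $A_1$ and $A_2$ partition the base $A$, we have $(X\cup A_1)\cup(X\cup A_2)=X\cup A$ and $(X\cup A_1)\cap(X\cup A_2)=X$; the former contains the base $A$ so has rank $n$, while the latter has rank $|X|$ because $X\subseteq B$ is independent. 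Hence $\rho(X\cup A_1)+\rho(X\cup A_2)\ge\rho(X\cup A)+\rho(X)=n+|X|$, as needed.

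The conceptual crux is spotting the reduction to packing two contraction-bases inside $B$ and observing that equality of cardinalities upgrades the union-theorem partition into a pair of bases; once that is in place the covering condition is precisely the displayed submodularity inequality, which is immediate. If one wishes to avoid invoking matroid union, the alternative is an induction on $k$ that peels off a single symmetric exchange via Observation~\ref{Obs2} at each step, but synchronizing the two contraction constraints across the induction reproduces exactly the covering inequality above, so I would prefer the union-theorem route as the cleaner argument.
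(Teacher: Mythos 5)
Your argument is correct and complete. Note first that the paper itself offers no proof of Lemma~\ref{BElem}: it is quoted as a known result with references to Brylawski, Greene and Woodall, so there is no internal argument to compare yours against. Your reduction is sound: for $X\subseteq B$ the two contraction ranks are $\rho(X\cup A_2)-(n-k)$ and $\rho(X\cup A_1)-k$, the covering form of the matroid union theorem turns the problem into the inequality $\rho(X\cup A_1)+\rho(X\cup A_2)\ge n+|X|$, and since $A_1\cap A_2=\emptyset$ forces $(X\cup A_1)\cap(X\cup A_2)=X$ while $(X\cup A_1)\cup(X\cup A_2)\supseteq A$, submodularity gives exactly this bound (using that $X\subseteq B$ is independent, so $\rho(X)=|X|$). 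The cardinality count $k+(n-k)=|B|$ correctly upgrades the partition into a pair of bases of the two contractions, and your remark that independent sets of $N_1$, $N_2$ avoid the loops $A_2$, $A_1$ is the right way to handle the case where $A$ and $B$ are not disjoint (which the lemma as stated permits). This is essentially the standard modern derivation of the symmetric subset exchange property via matroid union/covering, as opposed to the more hands-on inductive exchange arguments in the original sources; it buys brevity at the cost of invoking the union theorem, which is a strictly heavier tool than anything else used in this paper, but that is a matter of taste rather than a defect.
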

We conjecture that the subsets $A_{1}$ and $B_{1}$ in Lemma~\ref{BElem} can be exchanged serially:
\begin{conj}\label{conj32}
Let $A$ and $B$ be two bases of a matroid $M$. For any $A_{1}\subset A$ there exists $B_{1}\subset B$ such that $A_{1}$ and $B_{1}$ form a serial symmetric exchange relative to $A$ and $B$.
\end{conj}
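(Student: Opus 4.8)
The plan is to induct on $k=\abs{A_{1}}$. The base case $k=1$ is exactly Observation~\ref{Obs2}: the single element $a\in A_{1}$ is symmetrically exchangeable with some $b\in B$, so $(\{a\},\{b\})$ is a serial symmetric exchange. For the inductive step I would peel off one carefully chosen pair and recurse on the rest.

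Concretely, given $A_{1}=\{a_{1},\ldots,a_{k}\}$, I would select a first element $a_{1}\in A_{1}$ together with a partner $b_{1}\in B$ that is symmetrically exchangeable with it (Observation~\ref{Obs2}), perform the exchange to obtain the bases $A'=A-a_{1}+b_{1}$ and $B'=B-b_{1}+a_{1}$, and apply the induction hypothesis to $A_{1}-a_{1}=\{a_{2},\ldots,a_{k}\}\subset A'$ to produce $B_{1}'=\{b_{2},\ldots,b_{k}\}$ forming a serial symmetric exchange relative to $A'$ and $B'$. The reason this should glue is a telescoping identity: for each $i\ge 2$ one has $(A'\setminus\{a_{2},\ldots,a_{i}\})\cup\{b_{2},\ldots,b_{i}\}=(A\setminus\{a_{1},\ldots,a_{i}\})\cup\{b_{1},\ldots,b_{i}\}$ and, symmetrically, $(B'\setminus\{b_{2},\ldots,b_{i}\})\cup\{a_{2},\ldots,a_{i}\}=(B\setminus\{b_{1},\ldots,b_{i}\})\cup\{a_{1},\ldots,a_{i}\}$. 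Hence the serial symmetric exchange relative to $A',B'$, prefixed by the step $(a_{1},b_{1})$, is verbatim a serial symmetric exchange of $A_{1}$ and $B_{1}=\{b_{1}\}\cup B_{1}'$ relative to $A,B$.

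The catch, and the reason the statement resists a one-line induction, is that the induction hypothesis returns $B_{1}'\subset B'=B-b_{1}+a_{1}$, and nothing forbids the re-inserted element $a_{1}$ from being chosen as one of the partners $b_{2},\ldots,b_{k}$; if some $b_{j}=a_{1}$ then $B_{1}\not\subset B$ and the construction fails. So the real content is to choose the first pair $(a_{1},b_{1})$ so that $A_{1}-a_{1}$ still admits a serial symmetric exchange drawn entirely from $B-b_{1}$, i.e.\ avoiding $a_{1}$. I would track this with the support machinery of Section~2: Lemma~\ref{lem4} shows that $b_{1}$ inherits both the support and the supported set of $a_{1}$ after the swap, Lemma~\ref{lem3} isolates which supports a single exchange leaves unchanged, and Lemma~\ref{lem2:5} relates the serial supports back to the original ones. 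A short computation with Lemma~\ref{lem4} shows that $a_{1}$ can be a partner of $a_{j}$ in $A',B'$ only when $b_{1}\in Conn(a_{1},a_{j},A,B)$; the decisive leverage is then Proposition~\ref{lem2}, since $\abs{Conn(a_{1},a_{j},A,B)}\neq 1$ means that a potential forced connector never occurs in isolation, and the extra flexibility can be used to steer the first exchange so that $a_{j}$ retains a partner inside $B$.

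The main obstacle I expect is upgrading this from one conflict to all of them at once: the rerouting supplied by Proposition~\ref{lem2} resolves the conflict for a single $a_{j}$, but a choice of $(a_{1},b_{1})$ that clears one conflict may create another, and I see no obvious monovariant forcing the process to terminate. For $\abs{A_{1}}=2$ only one such conflict can arise and Proposition~\ref{lem2} disposes of it, which is why the two-element case is within reach; a further simplification there is to fix $B_{1}$ by Lemma~\ref{BElem} and contract $A\setminus A_{1}$ and $B\setminus B_{1}$, reducing to rank $\abs{A_{1}}$, though this splits the $A$-side and $B$-side conditions into two distinct contracted matroids sharing the bases $A_{1},B_{1}$, so it does not by itself collapse the general case. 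Making the simultaneous rerouting work for arbitrary $k$ is exactly where the argument, and with it the conjecture, remains open.
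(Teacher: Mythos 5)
You have not produced a proof, and you should be aware that the paper does not contain one either: the statement you were given is Conjecture~\ref{conj32}, which the paper explicitly leaves open, proving only the case $\abs{A_{1}}=2$ (Theorem~\ref{thm1}) and the consequence for rank~$4$. Your proposal is a proof \emph{plan} that correctly identifies the central obstruction to the obvious induction --- after the first swap the induction hypothesis lives in $B'=B-b_{1}+a_{1}$, and nothing prevents the recycled element $a_{1}$ from being drafted as a partner $b_{j}$, which would make $B_{1}\not\subset B$ --- and you honestly concede at the end that you cannot close this. That concession is the gap: the ``simultaneous rerouting'' you would need (choosing $(a_{1},b_{1})$ so that \emph{every} $a_{j}$ retains a partner inside $B-b_{1}$) is precisely the open content of the conjecture, and Proposition~\ref{lem2} by itself only rules out a single forced connector, with no monovariant to prevent fixing one conflict from creating another. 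So the proposal should be read as a correct diagnosis of why the conjecture is hard, not as an argument for it.

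Two further calibrations against what the paper actually does for $\abs{A_{1}}=2$. First, your sketch suggests keeping the initial exchange $(a_{1},b_{1})$ and repairing the second step; the paper's proof of Theorem~\ref{thm1} does something different --- when $a_{2}$ is forced to pair with $a_{1}$ in $(A',B')$, it \emph{abandons} $b_{1}$ altogether, uses Proposition~\ref{lem2} twice to produce candidates $b_{2},b_{3}$, and then runs a two-case circuit-elimination argument (Lemma~\ref{lem1} together with Lemmas~\ref{lem3} and~\ref{lem4}) to show that $\{a_{1},a_{2}\}$ exchanges serially with $\{b_{2},b_{3}\}$, with the first exchange being $(a_{1},b_{2})$ or $(a_{2},b_{2})$ rather than $(a_{1},b_{1})$. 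Your phrase ``only one such conflict can arise and Proposition~\ref{lem2} disposes of it'' substantially understates this work. Second, your necessary condition for the conflict is right but incomplete: $a_{1}$ can partner $a_{j}$ in $(A',B')$ if and only if both $a_{j}\in C(A,b_{1})$ and $b_{1}\in C(B,a_{j})$, i.e.\ if and only if $b_{1}$ and $a_{j}$ were already symmetrically exchangeable relative to $A$ and $B$; the condition $b_{1}\in Conn(a_{1},a_{j},A,B)$ captures only the first half. If you want a provable target, aim at reproducing Theorem~\ref{thm1}; the general statement remains a conjecture.
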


the main result of this paper shows that Conjecture~\ref{conj32} holds for subsets of size two:
\begin{thm}\label{thm1}
Let $A$ and $B$ be two bases of a matroid $M$. For any $A_{1}\subset A$ of size two there exists $B_{1}\subset B$ such that $A_{1}$ and $B_{1}$ form a serial symmetric exchange relative to $A$ and $B$.
\end{thm}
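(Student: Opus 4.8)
The plan is to reduce the statement to a single local condition and then to secure a valid first move with the help of Proposition~\ref{lem2}. The case $A\cap B\neq\emptyset$ is trivial, since a common element is symmetrically exchangeable with itself; so I assume $A\cap B=\emptyset$ and, restricting $M$ to $A\cup B$, that $M$ is a block matroid (as in the proof of Proposition~\ref{lem2}). Write $A_1=\{a',a''\}$. The key reduction is that it suffices to find $B_1=\{b_1,b_2\}\subseteq B$ such that (a) both $(A\setminus A_1)\cup B_1$ and $(B\setminus B_1)\cup A_1$ are bases, and (b) at least one of the four pairs $(a_i,b_j)$, $a_i\in A_1$, $b_j\in B_1$, is a symmetric exchange relative to $A$ and $B$. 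Indeed, if $(a',b_1)$ is such a pair, exchanging it first gives the bases $A-a'+b_1$ and $B-b_1+a'$, and the two conditions needed to exchange $(a'',b_2)$ next are exactly the two base conditions in (a); thus (a) and (b) together yield the serial symmetric exchange. Lemma~\ref{BElem} already provides a $B_1$ satisfying (a); the whole difficulty is to secure (a) and (b) simultaneously.

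First I would fix a $B_1=\{b_1,b_2\}$ as in (a) and pass to the bipartite digraph defined before Notation~\ref{notn1}. A short Hall-type argument applied to the two base conditions in (a) (equivalently, the bijective form of basis exchange, see \cite{Bry}, \cite{Greene}, \cite{Wood}) yields, after relabelling, forward edges $a'\to b_1$ and $a''\to b_2$ (from the base $(B\setminus B_1)\cup A_1$, i.e. $b_1\in C(B,a')$, $b_2\in C(B,a'')$) together with backward edges (from the base $(A\setminus A_1)\cup B_1$) that are either \emph{aligned}, $a'\in C(A,b_1)$ and $a''\in C(A,b_2)$, or \emph{crossed}, $a'\in C(A,b_2)$ and $a''\in C(A,b_1)$. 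In the aligned case, $b_1\in C(B,a')$ and $a'\in C(A,b_1)$ make $(a',b_1)$ a symmetric exchange by Observation~\ref{obs4}, so (b) holds and we are done.

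The obstacle is the crossed case, which I would enter only when (b) fails for the chosen $B_1$. There the four edges close into a directed four-cycle $a'\to b_1\to a''\to b_2\to a'$; the crossed matching gives $b_1\in Conn(a',a'',A,B)$ and $b_2\in Conn(a'',a',A,B)$, while the failure of every pair to be a symmetric exchange forces in addition $b_1\notin C(B,a'')$ and $a'\notin C(A,b_1)$, and symmetrically for $b_2$. This is exactly where Proposition~\ref{lem2} is used: both $Conn(a',a'',A,B)$ and $Conn(a'',a',A,B)$ are now nonempty, hence not singletons, so each contains a further element $b_1'$, $b_2'$. The plan is to reroute through $b_1'$, which satisfies $a'\to b_1'\to a''$: one shows that either $b_1'$ closes a two-cycle with $a'$ or with $a''$ — producing a symmetric first move at once — or else that the rerouted pair $\{b_1',b_2\}$ (or $\{b_1,b_2'\}$) again satisfies (a) while no longer being crossed. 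The base conditions for the new pair are verified by tracking the relevant supports through the exchange with Lemma~\ref{lem3} and Lemma~\ref{lem4}, the serial-support identity of Lemma~\ref{lem2:5}, and circuit elimination (Lemma~\ref{lem1}).

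I expect this crossed case to be the main obstacle, and within it the genuinely delicate sub-case is the degenerate one in which each extra connector supplied by Proposition~\ref{lem2} is itself symmetrically exchangeable with both $a'$ and $a''$. Then the monotone tracking lemmas become vacuous — the fundamental circuits do not change in the controlled way that Lemma~\ref{lem3} exploits — and the base conditions for the rerouted $B_1$ have to be checked directly from circuit elimination, choosing $B_1$ so that $a'$ and $a''$ stay independent after contracting the complement of $B_1$ in $B$. The non-singleton property of $Conn$ is precisely the combinatorial input guaranteeing that enough connectors are available for this rerouting to go through.
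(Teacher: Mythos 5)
Your reduction is sound: conditions (a) (both $(A\setminus A_1)\cup B_1$ and $(B\setminus B_1)\cup A_1$ are bases) and (b) (some pair $(a_i,b_j)$ is a symmetric exchange) do together yield the serial symmetric exchange, and the identification of the ``crossed'' four-cycle as the only obstruction is a correct reformulation. But the resolution of the crossed case --- which is the entire substance of the theorem --- is only gestured at, and the dichotomy you propose for it does not hold. If $b_1'\in Conn(a',a'',A,B)\setminus\{b_1\}$ is \emph{not} symmetrically exchangeable with $a'$ or with $a''$, then $a'\not\in C(A,b_1')$ and $b_1'\not\in C(B,a'')$, and a direct check of the four pairs shows that $\{b_1',b_2\}$ is again crossed (if it satisfies (a) at all); so rerouting does not escape the crossed configuration. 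In the other sub-case, where $b_1'$ does close a two-cycle with $a'$ or $a''$, you obtain only a symmetric \emph{first} move; completing it serially requires verifying condition (a) for the resulting pair, which is exactly the original difficulty (Observation~\ref{Obs2} already gives a first move for free --- the whole point of the theorem is the second step). Listing Lemmas~\ref{lem1}, \ref{lem2:5}, \ref{lem3} and \ref{lem4} as tools does not substitute for the argument.

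The paper's proof works differently at precisely this point: it performs the first exchange $(a_1,b_1)$, passes to the intermediate bases $A'=A-a_1+b_1$ and $B'=B-b_1+a_1$, and applies Proposition~\ref{lem2} to $Conn(b_1,a_2,A',B')$ and $Conn(a_2,b_1,A',B')$ --- sets taken relative to the \emph{new} bases, which contain the element $a_1$ --- rather than to $Conn(a',a'',A,B)$ in the original bases. The escape from the bad configuration is then achieved by auxiliary symmetric exchanges $(b_1,b_2)$ or $(a_2,a_1)$ performed relative to $A'$ and $B'$ (exchanges between two elements originally in the same base), followed by circuit-elimination arguments showing that specific elements $b_2,b_3$ complete the serial exchange. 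These intermediate moves have no counterpart in your static ``choose a pair $B_1\subseteq B$ and reroute'' framework, and without something playing their role the crossed case remains open. As it stands the proposal is a plan, not a proof.
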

\begin{proof}
Let $\{a_{1},a_{2}\}\subset A$ and suppose $a_{1}$ is symmetrically exchangeable, relative to $A$ and $B$, with $b_{1}\in B$. Let $A'=A-a_{1}+b_{1}$ and $B'=B-b_{1}+a_{1}$. Assume that $a_{2}$ is not symmetrically exchangeable, relative to $A'$ and $B'$, with any of $\{b_{2}, b_{3},\ldots\}$ (otherwise we are done). Hence, by Observation~\ref{Obs2}, $a_{2}$ must be symmetrically exchangeable with $a_{1}$ (relative to $A'$ and $B'$). Thus $a_{2}\in C(A',a_{1})$ and $a_{1}\in C(B',a_{2})$. Since $a_{1}\in C(B',b_{1})$ and $b_{1}\in C(A',a_{1})$ ($b_{1}$ and $a_{1}$ are symmetrically exchangeable relative to $A'$ and $B'$), $a_{1}\in Conn(b_{1},a_{2},A',B')$ and $a_{1}\in Conn(a_{2},b_{1},A',B')$ (Notation~\ref{notn1}). By Proposition~\ref{lem2} there must be some $b_{i}$ with $i\neq1$, say $b_{2}$, such that
\begin{equation}\label{eq1}
b_{2}\in C(B',b_{1})\hspace{10 mm}\textrm{and}\hspace{10 mm}a_{2}\in C(A',b_{2}),
\end{equation}
and there is some $b_{j}$ with $j\geq3$, say $b_{3}$, such that
\begin{equation}\label{eq2}
b_{3}\in C(B',a_{2})\hspace{10 mm}\textrm{and}\hspace{10 mm}b_{1}\in C(A',b_{3})
\end{equation}
($j\neq2$ since we assume that $a_{2}$ is not symmetrically exchangeable with $b_{2}$ relative to $A'$ and $B'$).

We will show that $\{a_{1},a_{2}\}$ and $\{b_{2},b_{3}\}$ form a serial symmetric exchange as desired.

Since we assumed that $a_{2}$ is not symmetrically exchangeable with either $b_{2}$ or $b_{3}$ relative to $A'$ and $B'$, it follows from (\ref{eq1}), (\ref{eq2}) and Observation~\ref{obs4} that
\begin{equation}\label{eq3}
b_{2}\not\in C(B',a_{2})\hspace{10 mm}\textrm{and}\hspace{10 mm}a_{2}\not\in C(A',b_{3}),
\end{equation}
The remainder of the proof is as follows. We distinguish two separate cases. If $b_{1}$ and $b_{2}$ are symmetrically exchangeable relative to $A'$ and $B'$ we exchange them and show that after this exchange $a_{2}$ becomes symmetrically exchangeable with $b_{3}$ and we are done. If $b_{1}$ and $b_{2}$ are not symmetrically exchangeable relative to $A'$ and $B'$, we exchange $a_{2}$ and $a_{1}$, relative to $A'$ and $B'$, and show that after this exchange $b_{1}$ becomes symmetrically exchangeable with $b_{2}$. We exchange them and show that we can proceed from here as in case 1.

\underline{Case 1}: $b_{1}$ and $b_{2}$ are symmetrically exchangeable relative to $A'$ and $B'$. Thus, by Observation~\ref{obs4},
\begin{equation}\label{eq4}
b_{1}\in C(A',b_{2}).
\end{equation}
We look at the circuits $C^{+}(A',b_{2})$ and $C^{+}(A',b_{3})$. From (\ref{eq1}), (\ref{eq2}), (\ref{eq3}), (\ref{eq4}) and Lemma~\ref{lem1} it follows that there is a circuit $D$ containing $a_{2}$ and excluding $b_{1}$. $D$ must contain at least one of $b_{2}$ and $b_{3}$, otherwise it would contain only elements of $A$. We show that it must contain both. If $b_{2}\not\in D$, then $D$ consists of $b_{3}$, $a_{2}$ and possibly some other $a_{i}$s with $i>2$. It follows that $D-b_{3}=C(A', b_{3})$. In particular $a_{2}\in C(A', b_{3})$, contrary to (\ref{eq3}). If $b_{3}\not\in D$, then $D$ consists of $b_{2}$, $a_{2}$ and possibly some other $a_{i}$s with $i>2$. It follows that $D-b_{2}=C(A', b_{2})$. In particular $b_{1}\not\in C(A', b_{2})$, contrary to (\ref{eq4}).

We now perform the exchange between $b_{1}$ and $b_{2}$, relative to $A'$ and $B'$, and obtain the bases $A^{*}=A'-b_{1}+b_{2}$ and $B^{*}=B'-b_{2}+b_{1}$. Note that $A^{*}$ and $B^{*}$ can be obtained from $A$ and $B$ by the single exchange $(a_{1}, b_{2})$. Now, since the circuit $D$ from the previous paragraph consists of $b_{2}$, $b_{3}$, $a_{2}$ and possibly some other $a_{i}$s with $i>2$, we have that $D-b_{3}=C(A^{*},b_{3})$. In particular
\begin{equation}\label{eq5}
a_{2}\in C(A^{*},b_{3}).
\end{equation}
Since $b_{2}\not\in C(B',a_{2})$ (by (\ref{eq3})) we have that $C(B',a_{2})=C(B^{*},a_{2})$ and from (\ref{eq2}) we obtain
\begin{equation}\label{eq6}
b_{3}\in C(B^{*},a_{2}).
\end{equation}
Equations~(\ref{eq5}) and (\ref{eq6}) imply that $a_{2}$ and $b_{3}$ are symmetrically exchangeable relative to $A^{*}$ and $B^{*}$. Since $A^{*}$ and $B^{*}$ can be obtained from $A$ and $B$ by the single symmetric exchange $(a_{1},b_{2})$, we conclude that the sets $\{a_{1},a_{2}\}$ and $\{b_{2},b_{3}\}$ constitute a serial symmetric exchange relative to $A$ and $B$.

\underline{Case 2}: $b_{1}$ and $b_{2}$ are not symmetrically exchangeable relative to $A'$ and $B'$. From (\ref{eq1}) and Observation~\ref{obs4} we must have that
\begin{equation}\label{eq7}
b_{1}\not\in C(A',b_{2}).
\end{equation}
We recall that $C^{+}(A',a_{1})$ consists, besides $a_1$, of $b_{1},\,a_{2}$ and possibly other $a_{i}$s with $i>2$. Also, $C^{+}(A',b_{2})$ consists of $a_{2}$ and possibly other $a_{i}$s with $i>2$, but excludes $b_{1}$, by (\ref{eq7}). Applying Lemma~\ref{lem1} to these two circuits we obtain a circuit $D'$ containing $a_{1}$, possibly other $a_{i}$s with $i>2$, at least one of $b_{1}$ and $b_{2}$, and excluding $a_{2}$. We claim that $D'$ must contain both $b_{1}$ and $b_{2}$. First we observe that since $C(A',b_{2})$ contains $a_{2}$ and excludes $b_{1}$, $a_{2}\in C(A',b_{2})=C(A,b_{2})$. Suppose $b_{1}\not\in D'$. Then $D'$ consists of $b_{2}$, $a_{1}$ and possibly some other $a_{i}$s with $i>2$. Hence $D'-b_{2}=C(A,b_{2})$, which means that $a_{2}\not\in C(A,b_{2})=C(A',b_{2})$, contradicting (\ref{eq1}). Now suppose $b_{2}\not\in D'$. Then $D'$ consists of $b_{1}$, $a_{1}$ and possibly some other $a_{i}$s with $i>2$. Thus $D'-a_{1}=C(A',a_{1})$ and hence $a_{2}\not\in C(A',a_{1})$, contradicting the assumption that $a_{2}$ and $a_{1}$ are symmetrically exchangeable relative to $A'$ and $B'$. Hence $b_{1}, b_{2}\in D'$.

We now exchange $a_{2}$ and $a_{1}$ relative to $A'$ and $B'$ and obtain the bases $A''=A'-a_{2}+a_{1}$ and $B''=B'-a_{1}+a_{2}$. Note that $A''$ and $B''$ can be obtained from $A$ and $B$ by the single exchange $(a_{2},b_{1})$. Since $a_{1},b_{1}, b_{2}\in D'$ and $a_{2}\not\in D'$, then $D'-b_{2}=C(A'',b_{2})$. Thus
\begin{equation}\label{eq8}
b_{1}\in C(A'',b_{2}).
\end{equation}
Recall that in (\ref{eq1}) we had that $b_{2}\in C(B',b_{1})$ and in (\ref{eq3}) we had that $b_{2}\not\in C(B',a_{2})$. It follows from Lemma~\ref{lem3} that after exchanging $a_{2}$ and $a_{1}$ relative to $A'$ and $B'$ we must have that
\begin{equation}\label{eq9}
b_{2}\in C(B'',b_{1}).
\end{equation}
From (\ref{eq8}), (\ref{eq9}) we have that $b_{1}$ and $b_{2}$ are symmetrically exchangeable relative to $A''$ and $B''$.

In (\ref{eq2}) we had that $b_{1}\in C(A',b_{3})$ and in (\ref{eq3}) we had that $a_{2}\not\in C(A',b_{3})$. It follows from Lemma~\ref{lem3} that after exchanging $a_{2}$ with $a_{1}$, relative to $A'$ and $B'$, we must have that $b_{1}\in C(A'',b_{3})$. By Lemma~\ref{lem4}, $C(B'',a_1)=C(B',a_2)-a_1+a_2$ and for $b'\in B'-a_1$, $a_1\in C(A'',b')$ if and only if $a_2\in C(A',b')$. Thus $b_3\in C(B'',a_1)$ and $a_1\in C(A'',b_2)$ and we are back in the setup of Case 1 with $A'$ and $B'$ replaced by $A''$ and $B''$ respectively, and exchanging the roles of $a_{1}$ and $a_{2}$. Following the arguments of Case 1 we obtain the bases $A^{**}=A''-b_{1}+b_{2}$ and $B^{**}=B''-b_{2}+b_{1}$ and the symmetric exchange $(a_{1}, b_{3})$ relative to $A^{**}$ and $B^{**}$. Since $A^{**}$ and $B^{**}$ can be obtained from $A$ and $B$ by the single symmetric exchange $(a_{2}, b_{2})$, we conclude that the sets $\{a_{1},a_{2}\}$ and $\{b_{2},b_{3}\}$ constitute a serial symmetric exchange relative to $A$ and $B$. This completes the proof of Theorem~\ref{thm1}.
\end{proof}

\section{The Case $\rho(M)=4$}
In this section $M$ is a matroid of rank 4 consisting of two bases $A=\{a_{1},a_{2},a_{3},a_{4}\}$ and $B=\{b_{1},b_{2},b_{3},b_{4}\}$.
\begin{prop}\label{prop1}
Let $\{a_{1},a_{2}\}$ and $\{b_{1},b_{2}\}$ be a serial symmetric exchange relative to $A$ and $B$. Let $a'\in A\setminus \{a_{1},a_{2}\}$ and $b'\in B\setminus \{b_{1},b_{2}\}$. If $(a',b')$ is a symmetric exchange relative to $A$ and $B$, then the pair $(A,B)$ is a serial symmetric exchange.
\end{prop}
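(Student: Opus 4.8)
The plan is to use the given serial symmetric exchange to reach an intermediate pair of bases and then complete it in two further steps, the first of which turns out to be exactly the hypothesized exchange $(a',b')$ in disguise. Relabel so that $a'=a_{3}$ and $b'=b_{3}$, and write $A\setminus\{a_{1},a_{2},a_{3}\}=\{a_{4}\}$ and $B\setminus\{b_{1},b_{2},b_{3}\}=\{b_{4}\}$. By hypothesis there are orderings, say $a_{1}\prec a_{2}$ and $b_{1}\prec b_{2}$, witnessing that $\{a_{1},a_{2}\}$ and $\{b_{1},b_{2}\}$ form a serial symmetric exchange relative to $A$ and $B$. In particular $A-a_{1}+b_{1}$, $B-b_{1}+a_{1}$ and
\[
\tilde{A}:=\{b_{1},b_{2},a_{3},a_{4}\},\qquad \tilde{B}:=\{a_{1},a_{2},b_{3},b_{4}\}
\]
are all bases.

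The main step is to show that $(a_{4},b_{4})$ is a symmetric exchange relative to $\tilde{A}$ and $\tilde{B}$. By definition this amounts to checking that $\tilde{A}-a_{4}+b_{4}$ and $\tilde{B}-b_{4}+a_{4}$ are bases, and the whole argument turns on the pair of set identities
\[
\tilde{A}-a_{4}+b_{4}=\{b_{1},b_{2},a_{3},b_{4}\}=B-b_{3}+a_{3},\qquad \tilde{B}-b_{4}+a_{4}=\{a_{1},a_{2},a_{4},b_{3}\}=A-a_{3}+b_{3}.
\]
Since $(a_{3},b_{3})=(a',b')$ is a symmetric exchange relative to $A$ and $B$, both $B-b_{3}+a_{3}$ and $A-a_{3}+b_{3}$ are bases (Observation~\ref{obs4}); hence so are $\tilde{A}-a_{4}+b_{4}$ and $\tilde{B}-b_{4}+a_{4}$, as required.

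Performing this exchange sends $\tilde{A},\tilde{B}$ to $\{b_{1},b_{2},a_{3},b_{4}\}$ and $\{a_{1},a_{2},a_{4},b_{3}\}$. The final step is to exchange $a_{3}$ with $b_{3}$ relative to these two bases, and this is automatic: $\{b_{1},b_{2},a_{3},b_{4}\}-a_{3}+b_{3}=B$ and $\{a_{1},a_{2},a_{4},b_{3}\}-b_{3}+a_{3}=A$ are bases. Assembling the four steps yields the orderings $a_{1}\prec a_{2}\prec a_{4}\prec a_{3}$ of $A$ and $b_{1}\prec b_{2}\prec b_{4}\prec b_{3}$ of $B$; the four prefixes are valid because the first two come from the given serial symmetric exchange, the third from the previous paragraph, and the fourth is the pair $(B,A)$ itself. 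Thus $(A,B)$ is a serial symmetric exchange.

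I expect no serious obstacle: the argument hinges entirely on recognizing that, in rank $4$, exchanging the ``leftover'' pair $(a_{4},b_{4})$ relative to the intermediate bases $\tilde{A},\tilde{B}$ is literally the same as exchanging the hypothesized pair $(a_{3},b_{3})$ relative to the original bases $A,B$. The only care needed is the bookkeeping of which leftover element pairs with which, and the verification that the last exchange lands exactly on $B$ and $A$ so that its validity comes for free; once the two set identities above are spotted, the rest is immediate.
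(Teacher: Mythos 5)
Your proof is correct and is essentially the paper's own argument, just with the roles of the leftover pairs relabeled: the paper takes $(a',b')=(a_4,b_4)$ and observes that the exchange $(a_3,b_3)$ on the intermediate bases $\{b_1,b_2,a_3,a_4\}$, $\{a_1,a_2,b_3,b_4\}$ lands exactly on the bases $B-b_4+a_4$ and $A-a_4+b_4$ guaranteed by the hypothesis, which is the same set identity you exploit. (Your citation of Observation~\ref{obs4} is unnecessary --- that both $A-a_3+b_3$ and $B-b_3+a_3$ are bases is just the definition of a symmetric exchange --- but this is cosmetic.)
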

\begin{proof}
Let $A'=\{b_{1},b_{2},a_{3},a_{4}\}$ and $B'=\{a_{1},a_{2},b_{3},b_{4}\}$ be the bases obtained after serially exchanging $\{a_{1},a_{2}\}$ with $\{b_{1},b_{2}\}$. Suppose that $(a_{4},b_{4})$ is a symmetric exchange relative to $A$ and $B$, so that $\{a_{1},a_{2},a_{3},b_{4}\}$ and $\{b_{1},b_{2},b_{3},a_{4}\}$ are bases. These two bases can also be obtained by performing the exchange $(a_{3},b_{3})$ on the bases $A'$ and $B'$.
\end{proof}

\begin{thm}
Conjecture~\ref{11::22} holds when $\rho(M)=4$.
\end{thm}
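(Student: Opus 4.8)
The plan is to reduce Conjecture~\ref{11::22} in rank $4$ to Proposition~\ref{prop1}, so that everything hinges on producing \emph{one} symmetric exchange that lives in the complementary coordinates of a given serial symmetric exchange of size two. By Theorem~\ref{thm1}, any two chosen elements $\{a_1,a_2\}\subset A$ have a serial symmetric exchange with some $\{b_1,b_2\}\subset B$. After carrying it out we arrive at the bases $A'=\{b_1,b_2,a_3,a_4\}$ and $B'=\{a_1,a_2,b_3,b_4\}$, and by Proposition~\ref{prop1} it suffices to exhibit a symmetric exchange $(a',b')$ relative to the \emph{original} $A$ and $B$ with $a'\in\{a_3,a_4\}$ and $b'\in\{b_3,b_4\}$. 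Thus the whole theorem reduces to the following combinatorial statement: among the four potential pairs $(a_i,b_j)$ with $i,j\in\{3,4\}$, at least one is a symmetric exchange relative to $A$ and $B$ --- but only after we are allowed to choose \emph{which} two elements of $A$ play the role of $\{a_1,a_2\}$.

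First I would set up the bipartite exchange digraph of Section~2 on the four-element bases and record, for each $a_i$, the set of $b_j$ with which it is symmetrically exchangeable (by Observation~\ref{obs4}, the pairs $(a_i,b_j)$ with $a_i\in C(A,b_j)$ and $b_j\in C(B,a_i)$). Observation~\ref{Obs2} guarantees each $a_i$ has at least one partner and each $b_j$ has at least one partner, so the symmetric-exchange relation, viewed as a bipartite graph $H$ on $A\cup B$, has no isolated vertices. The combinatorial heart is then: can we always choose a pair $\{a_1,a_2\}$, run Theorem~\ref{thm1} to get a matching $\{b_1,b_2\}$, and have a surviving symmetric exchange between the leftover $A$-pair and the leftover $B$-pair? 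Because $H$ has minimum degree at least one on both sides and $|A|=|B|=4$, a counting/pigeonhole analysis of the possible degree sequences should show that some choice of the initial pair leaves a usable edge of $H$ among the complementary coordinates.

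The cleanest route is a case analysis on the structure of $H$. If $H$ contains two disjoint edges $(a,b)$ and $(a',b')$ (a matching of size two in the symmetric-exchange relation), I would take $\{a_1,a_2\}$ to be the remaining two $A$-elements: Theorem~\ref{thm1} gives them a serial symmetric exchange with some $\{b_1,b_2\}$, and I must argue that the disjoint edge structure persists so that a symmetric exchange remains available on the complementary coordinates, triggering Proposition~\ref{prop1}. The delicate sub-case is when $H$ has no matching of size two, i.e.\ all edges share a common vertex; in that degenerate situation the argument used to prove Proposition~\ref{obs3} (where every $a\in A$ is exchangeable with a single $b$, or dually) must be adapted to show the conjecture directly. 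The main obstacle I anticipate is exactly this bookkeeping: Theorem~\ref{thm1} tells us a serial symmetric exchange of the chosen $A$-pair \emph{exists} but does not let us dictate the matched $B$-pair, so I must verify that \emph{some} valid choice of the initial $A$-pair (or of the roles within it) always leaves the complementary symmetric exchange intact. Establishing that a good choice always exists, rather than merely that symmetric exchanges exist pairwise, is where the real work lies, and I expect it to require tracking how $C(A',b')$ and $C(B',a')$ relate to their originals via Lemmas~\ref{lem3} and~\ref{lem4} in each configuration.
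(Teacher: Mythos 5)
Your reduction to Proposition~\ref{prop1} via Theorem~\ref{thm1} is the right frame, and it matches the paper's. But the proposal stops exactly where the proof has to happen: you state yourself that ``establishing that a good choice always exists \ldots is where the real work lies,'' and you never establish it. Worse, the route you sketch for that step is off in two ways. First, your reformulation --- that one must be able to \emph{choose} the initial pair $\{a_1,a_2\}$ so that the complementary coordinates retain an edge of $H$ --- is stronger than what is needed and is not what the paper proves; since Theorem~\ref{thm1} gives no control over which $B$-pair gets matched, a case analysis on matchings of $H$ runs straight into the obstacle you name and does not obviously terminate. Second, your worry about the ``persistence'' of edges of $H$ through the serial exchange, and the proposed use of Lemmas~\ref{lem3} and~\ref{lem4}, is a red herring: Proposition~\ref{prop1} only asks for a symmetric exchange $(a',b')$ relative to the \emph{original} bases $A$ and $B$, so nothing needs to survive the exchange.

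The missing idea is the paper's pivot to the \emph{other} $A$-pair. Fix the serial symmetric exchange of $\{a_1,a_2\}$ with $\{b_1,b_2\}$ from Theorem~\ref{thm1}. If some $(a_i,b_j)$ with $i,j\in\{3,4\}$ is a symmetric exchange relative to $A$ and $B$, Proposition~\ref{prop1} finishes. Otherwise there are \emph{no} edges of $H$ between $\{a_3,a_4\}$ and $\{b_3,b_4\}$; now apply Theorem~\ref{thm1} to $\{a_3,a_4\}$ instead. The first step of its serial symmetric exchange is a symmetric exchange relative to $A$ and $B$, hence must use $b_1$ or $b_2$, so the matched $B$-pair omits at least one $b\in\{b_3,b_4\}$. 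That leftover $b$ has, by Observation~\ref{Obs2}, a symmetric partner in $A$, which cannot be $a_3$ or $a_4$, hence lies in $\{a_1,a_2\}$ --- giving exactly the complementary symmetric exchange Proposition~\ref{prop1} requires. Without this (or an equivalent) argument, your proof is not complete.
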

\begin{proof}
From Theorem~\ref{thm1} we may assume, without loss of generality, that the sets $\{a_{1},a_{2}\}$ and $\{b_{1},b_{2}\}$ constitute a serial symmetric exchange relative to $A$ and $B$. If among the remaining elements there is a symmetric exchange relative to $A$ and $B$ we are done, by Proposition~\ref{prop1}. So, we assume that there is no symmetric exchange relative to $A$ and $B$ among $\{a_{3},a_{4}\}$ and $\{b_{3},b_{4}\}$. By Theorem~\ref{thm1}, the pair $\{a_{3},a_{4}\}$ and some pair of $B$-elements form a serial symmetric exchange. This pair must exclude at least one of $b_{3}$ and $b_{4}$ (since there is no symmetric exchange relative to $A$ and $B$ between $\{a_{3},a_{4}\}$ and $\{b_{3},b_{4}\}$, the first exchange must involve either $b_{1}$ or $b_{2}$). After serially exchanging $\{a_{3},a_{4}\}$ with a pair of elements of $B$ we are left with $\{a_{1},a_{2}\}$ on the $A$ side, and at least one of $b_{3}$ and $b_{4}$ on the $B$ side. These remaining elements must contain a symmetric exchange relative to $A$ and $B$, since both $b_{3}$ and $b_{4}$ have symmetric exchanges with $a_{1}$ or $a_{2}$ relative to $B$ and $A$ (they have no symmetric exchange with either $a_{3}$ or $a_{4}$ relative to $B$ and $A$). Hence, by Proposition~\ref{prop1}, $A$ and $B$ form a serial symmetric exchange.
\end{proof}

% ----------------------------------------------------------------
\bibliographystyle{amsplain}
\bibliography{kzrefs}
\end{document}